\pgfplotsset{compat=1.17}
\newtheorem{thm}{Theorem}[section]
\newtheorem{lemma}[thm]{Lemma}
\newtheorem{prop}[thm]{Proposition}
\newtheorem{cor}[thm]{Corollary}
\newtheorem{defn}[thm]{Definition}
\newtheorem{notat}[thm]{Notation}
\newtheorem{exmp}[thm]{Example}
\newtheorem{remark}[thm]{Remark}
\renewcommand{\a}{\alpha}
\renewcommand{\b}{\beta}
\renewcommand{\c}{\gamma}
\renewcommand{\t}{\theta}
\renewcommand{\O}{\mathcal{O}}
\newcommand{\frakp}{\mathfrak{p}}
\newcommand{\frake}{\texttt{e}}
\newcommand{\frakf}{\texttt{f}}
\newcommand{\frakg}{\texttt{g}}
\newcommand{\FF}{\mathbb{F}}
\newcommand{\NN}{\mathbb{N}}
\newcommand{\C}{\mathbb{C}}
\newcommand{\Norm}{\mathcal{N}}
\newcommand{\QQ}{\mathbb{Q}}
\newcommand{\ZZ}{\mathbb{Z}}
\newcommand{\Z}{\mathbb{Z}}
\newcommand{\Ia}{\mathfrak{a}}
\newcommand{\Ip}{\mathfrak{p}}
\renewcommand{\k}{\mathbb{k}}
\title{First-degree prime ideals of composite extensions}
\author{Giordano Santilli$^1$ \and Daniele Taufer$^2$}
\date{ \small $^1$Università degli Studi di Trento - \href{mailto: giordano.santilli@gmail.com}{giordano.santilli@gmail.com}\\%
    $^2$KU Leuven - \href{mailto: daniele.taufer@kuleuven.be}{daniele.taufer@kuleuven.be}\\[2ex]%
    \normalsize March 2024}
\begin{document}
 
\maketitle

\begin{abstract}
    Let $\QQ(\a)$ and $\QQ(\b)$ be linearly disjoint number fields and let $\QQ(\t)$ be their compositum.
    We prove that the first-degree prime ideals of $\ZZ[\t]$ may almost always be constructed in terms of the first-degree prime ideals of $\ZZ[\a]$ and $\ZZ[\b]$, and vice-versa. We also classify the cases in which this correspondence does not hold, by providing explicit counterexamples.
    We show that for every pair of coprime integers $d,e \in \ZZ$, such a correspondence almost always respects the divisibility of principal ideals of the form $(e+d\t)\ZZ[\t]$, with a few exceptions that we characterize.
    Finally, we discuss the computational improvement of such an approach, and we verify the reduction in time needed for computing such primes for certain concrete cases.
\end{abstract}

MSC 2020: 11Y05, 11Y40, 12F05 \\ 
\indent Keywords: First-degree prime ideals, principal ideal factorization, linearly disjoint extensions.


\section{Introduction}

Let $\mathcal{O}$ be the ring of integers of a number field $\QQ(\t)$.
It is well-known that the norm of its prime ideals is always a prime power $p^e$, and this property also holds for every sub-order of $\mathcal{O}$, such as $\ZZ[\t]$.
A special family of primes that deserves particular attention is composed of those of degree $e = 1$, namely those of prime norm.
Such \emph{first-degree prime ideals} have been classically studied as they constitute a set of basic components for ideals.
In fact, a positive fraction of prime integers splits only by means of first-degree primes, and the class group of any Galois field may be generated from products of such ideals \cite{Hilbert}.

More recently, similar results have been obtained in a more applied framework: first-degree prime ideals of $\ZZ[\t]$ have been proved to constitute a basis for principal ideals generated by $e+d\t$ in $\ZZ[\t]$ for every coprime pair $e,d \in \ZZ$ \cite{BLP}, and this evidence has been exploited for designing the celebrated General Number Field Sieve (GNFS) algorithm \cite{LLMP,BL}, which is nowadays the most efficient classical algorithm known for factoring large integers.
Indeed, after a parameters selection phase, such an algorithm needs to compute large sets of first-degree prime ideals of $\ZZ[\t]$, which will be employed for factoring the aforementioned principal ideals. These factorizations will be therefore sieved in order to detect certain relations, that should lead to the factorization of the input integer with a positive probability.
Moreover, the same algorithm has been proven effective for solving the discrete logarithm problem over finite fields, both for prime \cite{GNFS4Fp} and power-of-prime \cite{GNFS4FqA,GNFS4FqB} fields.

In this paper, the theory of first-degree prime ideals of $\ZZ[\t]$ is further enhanced by establishing their relation with the corresponding prime ideals obtained from the minimal (non-trivial) sub-fields of $\QQ(\t)$.
The novelty of this work is twofold.
From a theoretical perspective, whenever $\QQ(\t)$ is realized as the compositum of two linearly disjoint sub-fields $\QQ(\a)$ and $\QQ(\b)$, the factorization of $(e+d\t)$ is proved to be almost always readable from the divisibility of its relative norm in $\ZZ[\a]$ and $\ZZ[\b]$.
On a computational side, the described procedure leads to a more efficient method for producing first-degree primes of $\ZZ[\t]$, outperforming the standard algorithm of a linear factor which depends on the smoothness of the extension degree $[\QQ(\t):\QQ]$.

More precisely, employing the convenient description of such primes \cite{BLP} as 
\[
    (t,p) = \ker (\Z[\t] \to \FF_p, \ \t \mapsto t),
\]
the \emph{combination} of first-degree primes $(r,p) \subseteq \ZZ[\a]$ and $(s,p) \subseteq \ZZ[\b]$ is defined as $(r+s,p) \subseteq \ZZ[\t]$, and such an operation is proved to describe the vast majority of such primes in $\ZZ[\t]$.
Furthermore, the divisibility of principal ideals $I = (e+d\t)\ZZ[\t]$ is respected in all but exceptional cases, which are fully characterized in terms of the zeroes of the affine map
\begin{equation*}
    \phi : \FF_p \to \FF_p, \qquad x \mapsto -x - d^{-1}e.
\end{equation*}

The main novel results of this paper are collected in Table \ref{tab:recap1}.
Its first row indicates when the combination of first-degree prime ideals in $\ZZ[\a]$ and $\ZZ[\b]$ dividing $I_\a = I \cap \ZZ[\a]$ and $I_\b=I \cap \ZZ[\b]$ is a first-degree prime ideal of $\ZZ[\a+\b]$, and when it divides $I$. 
The second row depicts the opposite scenario, namely when a first-degree prime ideal of $\ZZ[\a+\b]$ dividing $I$ determines first-degree prime ideals in $\ZZ[\a]$ and $\ZZ[\b]$, and when they divide $I_\a$ and $I_\b$.

 \begin{table}[!htb]
 \begin{center}
 \resizebox{\textwidth}{!}{
\begin{tabular}{ |c|c|c| } 
\cline{2-3}
  \multicolumn{1}{c|}{} & Existence & Divisibility \\
 \hline 
 \rule{0pt}{3\normalbaselineskip} $(r,p), (s,p) \implies (t,p)$ & Always  & unless $\begin{cases}g(\phi(r)) \equiv 0 \bmod p \\ f(\phi(s)) \equiv 0 \bmod p \\ \phi(r) \not\equiv s \bmod p \\ \phi(s) \not\equiv r \bmod p\end{cases}$ \\ 
  & (Proposition \ref{idealsdownup}) & (Theorem \ref{thm:divbeltoab}) \\[0.2cm]
 \hline
 \rule{0pt}{1.2\normalbaselineskip}   & when: &\\ $(t,p) \implies (r,p), (s,p)$  & $t$ is a simple root of $\textup{minpol}_{\QQ}(\a+\b) \bmod{p}$, or  & Always\\ 
 & $\QQ(\a)$ and $\QQ(\b)$ are normal and of coprime degrees &  \\
  & (Propositions \ref{updown1} and \ref{updown2}) & (Theorem \ref{thm:divabtobel}) \\[0.2cm]
 \hline
\end{tabular}}
\caption{Overview of the main results of the paper.}
\label{tab:recap1} 

 \end{center}
 \end{table}

Such results lead to a bottom-up approach that may be employed for speeding up the production of these primes, as well as for designing new approaches based on the smaller extensions, whose usage is often preferable.

The employed hypotheses are not truly restrictive: every pair of reasonably uncorrelated fields happen to be linearly disjoint \cite{Cohn,CoprimeDiscriminant}, thus every composite extension may be realized this way, with a suitable choice of sub-extensions.
\emph{Ad hoc} examples are provided to show that every required hypothesis is essential.
 
This paper extends a previous work of the authors \cite{SanTau}, which addresses the same problem when the field $\QQ(\t)$ is biquadratic.
However, the techniques employed and developed in the current paper are more sophisticated and lead to a deeper comprehension of ideals in towers of fields.
The novel results not only generalize those of \cite{SanTau}, but also over a much wider range of situations and provide theoretical tools that may be exploited for computational and cryptographic purposes, such as factoring and sieving through number fields.

This paper is organized as follows: in Section \ref{sec:Preliminaries} the basic results about resultant and linearly disjoint extensions are recalled and combined to properly determine the field extensions that we address in the present work.
Section \ref{sec:FDPI} is devoted to defining the first-degree prime ideals combination and to establishing when this construction defines a complete correspondence of the considered first-degree prime ideals.
Such an association is proved to almost always respect the divisibility of prescribed principal ideals in Section \ref{sec:Divisibility}.
In Section \ref{sec:computational}, the complexity of a combination-based approach for computing first-degree prime ideals is discussed, and a computational comparison with the current method is presented.
Finally, in Section \ref{sec:conclusions} we review the work and hint at possible future research directions.

 
 \section{Preliminaries} \label{sec:Preliminaries}
 
 \subsection{Resultant}
 
 In this section, we recall the main properties of the polynomial resultant over a field.
 
 \begin{defn}[Resultant]
 Let $\k$ be a field and $f = \sum_{i = 0}^n a_i x^i, \ g = \sum_{i = 0}^m b_i x^i \in \k[x]$ be polynomials of degree $n$ and $m$, i.e. $a_nb_m \neq 0$.
 The resultant $R(f,g)$ of $f$ and $g$ is defined as the determinant of their Sylvester matrix, i.e.
 \[
 R(f,g) = \det \begin{pmatrix}
 a_n & a_{n-1} & a_{n-2} & \ldots & a_{0} & 0 & 0 & \ldots & 0 \\
 0 & a_n & a_{n-1} & \ldots & a_1 & a_{0} & 0 & \ldots & 0 \\
 \vdots & \vdots & \vdots & & \vdots & \vdots & \vdots & & \vdots \\ 
  0  & \ldots & 0 & a_{n} &  & \ldots &  & a_1 & a_{0} \\
  b_m & b_{m-1} & b_{m-2} & \ldots & b_{0} & 0 & 0 & \ldots & 0 \\
 0 & b_m & b_{m-1} & \ldots & b_1 & b_{0} & 0 & \ldots & 0 \\
 \vdots & \vdots & \vdots & & \vdots & \vdots & \vdots & & \vdots \\ 
  0  & \ldots & 0 & b_{m} &  & \ldots &  & b_1 & b_{0} 
 \end{pmatrix}.
\]
 \end{defn}
Hence, the resultant is the determinant of a $(n+m) \times (n+m)$ matrix, whose first $m$ rows contain the coefficients of $f$ padded with zeroes and shifted respectively on the right by $0, 1, \ldots, m-1$ positions, while the remaining $n$ rows are made of the coefficients of $g$ padded with zeroes and shifted respectively on the right by $0,1, \ldots, n-1$ positions. 


The resultant may be directly constructed from the roots $f$ and $g$, as follows.

\begin{prop}[{\cite[Prop. 8.3]{Lang}}] \label{prop:Resultant}
Let $f,g \in \k[x]$ as above, and let $L$ be an extension of $\k$ where both $f$ and $g$ split completely, i.e.
\begin{align*}
    f &= a_n \left(x-\alpha_1\right) \cdots \left(x-\alpha_n\right) \in L[x], \\
    g &= b_m \left(x-\beta_1\right) \cdots \left(x-\beta_m\right) \in L[x].
\end{align*}
Then 
\[
    R(f,g) = a_n^m b_m^n \prod_{i=1}^n \prod_{j=1}^m \left(\alpha_i - \beta_j \right).
\]
\end{prop}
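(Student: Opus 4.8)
The plan is to reduce the claim to the divisibility structure of the Sylvester determinant, working over the polynomial ring in which the roots and leading coefficients are treated as independent indeterminates. Concretely, I would regard $\alpha_1, \dots, \alpha_n, \beta_1, \dots, \beta_m, a_n, b_m$ as indeterminates over $\ZZ$ and express the remaining coefficients through the root factorization, using $a_i = (-1)^{n-i} a_n\, e_{n-i}(\alpha_1, \dots, \alpha_n)$ and $b_j = (-1)^{m-j} b_m\, e_{m-j}(\beta_1, \dots, \beta_m)$, where $e_k$ denotes the $k$-th elementary symmetric polynomial. An identity of polynomials over $\ZZ$ specializes to any field $L$, so it suffices to prove the formula at this universal level.

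First I would factor the leading coefficients out of the determinant. Since every entry of the $m$ rows built from $f$ has the form $a_i = a_n\cdot(\text{polynomial in the }\alpha)$ and every entry of the $n$ rows built from $g$ has the form $b_j = b_m\cdot(\text{polynomial in the }\beta)$, pulling $a_n$ out of each of the first $m$ rows and $b_m$ out of each of the last $n$ rows yields $R(f,g) = a_n^m\, b_m^n\, D$, where $D \in \ZZ[\alpha_1, \dots, \alpha_n, \beta_1, \dots, \beta_m]$ is the determinant of the matrix of signed elementary symmetric polynomials. The target identity is then equivalent to $D = \prod_{i,j}(\alpha_i - \beta_j)$.

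Next I would establish divisibility. Specializing $\alpha_i = \beta_j$ for a fixed pair forces $f$ and $g$ to share a root; writing $\mathbf{v}(x) = (x^{n+m-1}, \dots, x, 1)^\top$, the Sylvester matrix $S$ satisfies $S\,\mathbf{v}(x) = (x^{m-1}f, \dots, f, x^{n-1}g, \dots, g)^\top$, so evaluating at the common root produces a nonzero kernel vector and forces $\det S = 0$. By the factor theorem this shows $(\alpha_i - \beta_j)$ divides $R(f,g)$, and hence $D$, in the polynomial ring; as these linear forms are pairwise non-associate irreducibles, their product divides $D$. A homogeneity count — under $\alpha_i \mapsto \lambda\alpha_i$, $\beta_j \mapsto \lambda\beta_j$ each monomial of the Sylvester expansion scales by $\lambda^{nm}$, matching $\deg \prod_{i,j}(\alpha_i - \beta_j) = nm$ — shows the quotient is a constant $c$, so $D = c\prod_{i,j}(\alpha_i - \beta_j)$.

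Finally, determining $c$ is where the real bookkeeping lives, and it is the step I expect to be the main obstacle. I would pin it down by specializing $\beta_1 = \dots = \beta_m = 0$: the $g$-rows collapse to unit rows, the determinant reduces to a triangular minor of the $f$-rows with diagonal $e_n(\alpha) = \prod_i \alpha_i$, and comparison with $\prod_{i,j}(\alpha_i - \beta_j)\big|_{\beta = 0} = \big(\prod_i \alpha_i\big)^m$ yields $c = \pm 1$. The only genuinely delicate point is tracking the accumulated signs — those of the $(-1)^{n-i}$ factors together with the Laplace-expansion signs — to confirm $c = 1$ rather than merely $c = \pm 1$; once this is settled, $R(f,g) = a_n^m b_m^n D = a_n^m b_m^n \prod_{i,j}(\alpha_i - \beta_j)$ is exactly the claimed identity.
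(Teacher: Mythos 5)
The paper does not actually prove this proposition: it is quoted from Lang and used as a black box, so there is no internal proof to compare against. Your argument is the standard one (and essentially the one in Lang): pass to the universal ring where the roots and leading coefficients are independent indeterminates, factor $a_n^m b_m^n$ out of the Sylvester determinant, prove each $\alpha_i-\beta_j$ divides the remainder $D$ via the kernel vector at a common root, conclude by homogeneity that the quotient is a constant, and pin the constant down by specialization. Each step is sound, and the one point you explicitly leave open, the sign, does close. Specializing $\beta_1=\cdots=\beta_m=0$ turns the normalized $g$-rows into unit row vectors whose $1$'s sit in rows $m+1,\dots,m+n$ and columns $1,\dots,n$, so the Laplace-expansion sign is $(-1)^{\sum_{k=1}^n(m+k)+\sum_{k=1}^n k}=(-1)^{nm}$, while the complementary minor (rows $1,\dots,m$, columns $n+1,\dots,n+m$) is lower triangular with diagonal entries $a_0/a_n=(-1)^n\prod_i\alpha_i$, contributing $(-1)^{nm}\bigl(\prod_i\alpha_i\bigr)^m$; the two signs cancel, the product equals $\bigl(\prod_i\alpha_i\bigr)^m$, which is exactly $\prod_{i,j}(\alpha_i-\beta_j)$ evaluated at $\beta=0$, whence $c=1$. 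Two small points are worth making explicit if you write this up: the homogeneity claim holds because the entry of the normalized Sylvester matrix in row $r\le m$ and column $c$ has weight $c-r$, and in row $r>m$ and column $c$ has weight $c-r+m$, so every permutation term has total weight $\sum_c c-\sum_r r+nm=nm$; and passing from ``$\alpha_i-\beta_j$ divides $R(f,g)=a_n^mb_m^nD$'' to ``$\alpha_i-\beta_j$ divides $D$'' uses that this irreducible does not divide $a_n^mb_m^n$.
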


This formula leads to useful corollaries.

\begin{cor}[{\cite[Cor. 8.4]{Lang}}]
Let $f,g \in \k[x]$ as above.
Then $R(f,g)=0$ if and only if $f$ and $g$ have a common root in some field extension of $\k$.
\end{cor}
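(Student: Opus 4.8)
The plan is to prove the corollary as a direct consequence of the product formula in Proposition~\ref{prop:Resultant}. The statement is an ``if and only if'', and the cleanest route is to recognize that the resultant, being a product of leading coefficients and differences of roots, vanishes exactly when one of those factors vanishes. Since the hypothesis $a_n b_m \neq 0$ rules out the leading-coefficient factors $a_n^m$ and $b_m^n$ as a source of vanishing, the resultant can only be zero through the double product $\prod_{i=1}^n \prod_{j=1}^m (\alpha_i - \beta_j)$.

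First I would invoke Proposition~\ref{prop:Resultant} to write
\[
    R(f,g) = a_n^m b_m^n \prod_{i=1}^n \prod_{j=1}^m \left(\alpha_i - \beta_j\right),
\]
working inside a common splitting field $L$ of $f$ and $g$ over $\k$, which exists since we may take any field where both polynomials split completely. Because $\k$ is a field (and hence $L$ is an integral domain, in fact a field), a product vanishes if and only if one of its factors does. Given $a_n b_m \neq 0$, we have $a_n^m b_m^n \neq 0$, so $R(f,g) = 0$ if and only if $\alpha_i - \beta_j = 0$ for some pair $(i,j)$, i.e. if and only if some root $\alpha_i$ of $f$ coincides with some root $\beta_j$ of $g$ in $L$.

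It then remains to translate ``$\alpha_i = \beta_j$ in $L$'' into the statement ``$f$ and $g$ have a common root in some field extension of $\k$''. One direction is immediate: if $\alpha_i = \beta_j$, then this common element of $L$ is a root of both $f$ and $g$, and $L$ is a field extension of $\k$. For the converse, if $f$ and $g$ share a root $\gamma$ in some extension $K/\k$, then $\gamma$ is a root of $f$ and hence equals some $\alpha_i$, and likewise equals some $\beta_j$; I would note that the roots $\alpha_i,\beta_j$ are algebraic over $\k$, so we may embed them together with $\gamma$ into a single field (for instance a splitting field of $fg$ over $\k$), where the equalities $\gamma = \alpha_i = \beta_j$ force $\alpha_i = \beta_j$.

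I do not expect a genuine obstacle here, as the result is essentially unwinding the product formula. The only point requiring mild care is the bookkeeping around the splitting field: one must ensure that the roots $\alpha_i$, $\beta_j$ of the converse direction and the common root $\gamma$ live in a common extension so that root equalities can be compared, rather than treating ``a common root in some extension'' and ``a coincidence among the $\alpha_i,\beta_j$'' as if they automatically occur in the same field. This is handled by passing to a splitting field of the product $fg$, in which all the relevant roots appear simultaneously.
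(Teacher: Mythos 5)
Your argument is correct and follows exactly the route the paper gestures at: the paper states this corollary without proof, citing \cite[Cor.~8.4]{Lang} and introducing it with ``this formula leads to a well-known corollary,'' i.e.\ as a direct consequence of the product formula of Proposition~\ref{prop:Resultant}, which is precisely how you derive it. Your handling of the converse direction is also sound, since any common root $\gamma$ in an arbitrary extension is algebraic over $\k$ with minimal polynomial dividing both $f$ and $g$, so $\k(\gamma)$ embeds into the splitting field $L$ and the image of $\gamma$ realizes the required coincidence $\alpha_i=\beta_j$ there.
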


\begin{cor}[{\cite[p. 203]{Lang}}] \label{cor:Resultant}
Let $f,g \in \k[x]$ as above, then 
\[
R(f,g) = a_n^m \prod_{i=1}^n g(\alpha_i), \quad
R(f,g) = (-1)^{nm} b_m^n \prod_{j=1}^m f(\beta_j) .
\]
\end{cor}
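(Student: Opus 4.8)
The plan is to obtain both identities as immediate consequences of the root-based formula of Proposition \ref{prop:Resultant}, namely
\[
R(f,g) = a_n^m b_m^n \prod_{i=1}^n \prod_{j=1}^m (\alpha_i - \beta_j),
\]
by reinterpreting the inner products as evaluations of $g$ (respectively $f$) at the roots of the other polynomial. Since we are already handed the factored forms $f(x) = a_n \prod_{i=1}^n (x-\alpha_i)$ and $g(x) = b_m \prod_{j=1}^m (x-\beta_j)$ over the splitting field $L$, the whole argument reduces to substitution and a careful sign count.

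For the first identity, I would evaluate the factorization of $g$ at each root $\alpha_i$ of $f$, which gives $g(\alpha_i) = b_m \prod_{j=1}^m (\alpha_i - \beta_j)$. Taking the product over $i = 1, \dots, n$ then yields
\[
\prod_{i=1}^n g(\alpha_i) = b_m^n \prod_{i=1}^n \prod_{j=1}^m (\alpha_i - \beta_j),
\]
so multiplying through by $a_n^m$ recovers precisely the right-hand side of Proposition \ref{prop:Resultant}. This establishes $R(f,g) = a_n^m \prod_{i=1}^n g(\alpha_i)$ with no further work.

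For the second identity, I would symmetrically evaluate $f$ at the roots $\beta_j$ of $g$, obtaining $f(\beta_j) = a_n \prod_{i=1}^n (\beta_j - \alpha_i)$ and hence $\prod_{j=1}^m f(\beta_j) = a_n^m \prod_{j=1}^m \prod_{i=1}^n (\beta_j - \alpha_i)$. The only point requiring attention, and thus the main obstacle, is the sign: since $\beta_j - \alpha_i = -(\alpha_i - \beta_j)$ and there are exactly $nm$ such factors, the double product picks up a global factor $(-1)^{nm}$. Multiplying by $(-1)^{nm} b_m^n$ then cancels this sign, because $(-1)^{2nm} = 1$, and restores the symmetric expression of Proposition \ref{prop:Resultant}, proving $R(f,g) = (-1)^{nm} b_m^n \prod_{j=1}^m f(\beta_j)$. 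Beyond this bookkeeping of the antisymmetry of $\alpha_i - \beta_j$, the computation is entirely routine.
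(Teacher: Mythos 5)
Your proposal is correct and follows exactly the route the paper intends: the corollary is presented as an immediate consequence of Proposition \ref{prop:Resultant} (with a citation to Lang in place of a written-out proof), and your substitution of the factored forms of $g$ and $f$ at the roots of the other polynomial, together with the $(-1)^{nm}$ sign count from reversing each factor $\alpha_i-\beta_j$, is precisely that derivation. Nothing is missing.
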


We will apply resultants for constructing minimal polynomials of composite extensions.
In this perspective, we employ it to define another polynomial in $\k[x]$. 

\begin{notat} \label{not:Rfg}
Let $f,g \in \k[x]$ as above. For every $y \in \k$ we denote
\[
    R_{f,g}(y) = R\big(f(x),g(y-x)\big).
\]
We can view it as a polynomial $R_{f,g}(y) \in \k[y]$, which can again be seen as a polynomial $R_{f,g} \in \k[x]$ by evaluating $y$ in $x$.
Finally, we will drop the indices $f$ and $g$ when they are clear from the context.
\end{notat}

\begin{prop} \label{rescomp}
Let $f,g \in \k[x]$ be monic with $n =\deg(f)$, $m = \deg(g)$, and let $\alpha_1, \ldots, \alpha_n$ and $\beta_1, \ldots, \beta_m$ their respective (not necessarily distinct) roots in an extension $L$ of $\k$.
Then
\begin{equation*}
    R_{f,g} = \prod_{i=1}^n \prod_{j=1}^m (x-\alpha_i-\beta_j). 
\end{equation*}
\end{prop}

\begin{proof}
 Since $g= \prod_{j=1}^m (x- \beta_j) \in L[x]$, then for every $y \in \k$ we have $g(y-x)= \prod_{j=1}^m (y-x-\beta_j)$.
 From Corollary \ref{cor:Resultant} we obtain 
 \begin{equation*}
 R_{f,g}(y) = R\big(f(x),g(y-x)\big) = \prod_{i=1}^n g(y-\alpha_i) 
 = \prod_{i=1}^n \prod_{j=1}^m (y-\alpha_i- \beta_j),
 \end{equation*}
 which evaluated in $x$ as in Notation \ref{not:Rfg} gives the desired result.
\end{proof}

\begin{remark}
 It immediately follows from definitions that
 \[
    R\big(g(y-x),f(x)\big) = (-1)^{nm} R_{f,g}(y) = R_{g,f}(y).
 \]
\end{remark}

 
\subsection{Linearly disjoint extensions}

In this section, we recall the basics of field extensions that will be considered in the present paper.

\begin{prop}[{\cite[Ch. 5, Prop. 5.1]{Cohn}}]
\label{lin.indep.cond.}
Let $\k$ be a field and $\Omega$ be an algebraic extension of $\k$.
Let $A$ and $B$ be $\k$-subalgebras of $\Omega$. The following conditions are equivalent:
\begin{itemize}
    \item The $\k$-algebra homomorphism defined by
    \begin{equation*}
        A \otimes_{\k} B \to \Omega, \quad
        a \otimes b \mapsto ab,
    \end{equation*}
    is injective.
    \item Any $\k$-basis of $A$ is linearly independent over $B$.
    \item Any $\k$-basis of $B$ is linearly independent over $A$.
    \item If $\{u_i\}_{i}$ is a $\k$-basis of $A$ and $\{v_j\}_{j}$ is a $\k$-basis of $B$, then $\{u_iv_j\}_{i,j}$ are $\k$-linearly independent.
\end{itemize}
\end{prop}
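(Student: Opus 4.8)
The plan is to treat the first condition as a basis-free hinge and to reduce everything to a single multilinear-algebra fact. Fix a $\k$-basis $\{u_i\}_i$ of $A$ and a $\k$-basis $\{v_j\}_j$ of $B$. Then $\{u_i \otimes v_j\}_{i,j}$ is a $\k$-basis of $A \otimes_\k B$; this holds for $\k$-vector spaces of arbitrary dimension, so the fact that $\Omega / \k$ need not be finite causes no trouble. The multiplication map of the first item carries this basis to the family $\{u_i v_j\}_{i,j} \subseteq \Omega$. Since a linear map is injective exactly when it sends some (equivalently every) basis to a linearly independent set, the injectivity in the first condition is equivalent to the $\k$-linear independence of $\{u_i v_j\}_{i,j}$, which is precisely the fourth condition. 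This simultaneously shows that the fourth condition does not depend on the chosen bases.

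Next I would establish the equivalence of the fourth and second conditions by a direct translation between $B$-linear relations among the $u_i$ and $\k$-linear relations among the products $u_i v_j$. The key identity is that, writing any $b_i \in B$ in the basis as $b_i = \sum_j c_{ij} v_j$ with $c_{ij} \in \k$, one has
\[
    \sum_i b_i u_i = \sum_{i,j} c_{ij}\, u_i v_j .
\]
Thus a nontrivial $B$-linear dependence $\sum_i b_i u_i = 0$ yields a nontrivial $\k$-linear dependence among the $u_i v_j$ (nontriviality is preserved because $\{v_j\}_j$ is a $\k$-basis of $B$), and conversely every such $\k$-linear dependence collapses to a $B$-linear one. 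Running this for an arbitrary basis $\{u_i\}_i$ of $A$ gives both implications, so the quantifier ``any $\k$-basis'' is handled correctly.

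Finally, the first condition is visibly symmetric in $A$ and $B$, since the multiplication map and its injectivity are unchanged under the canonical isomorphism $A \otimes_\k B \cong B \otimes_\k A$. Interchanging the roles of $A$ and $B$ in the previous step therefore yields the equivalence of the first and third conditions as well, closing the chain. The only point requiring care is the quantifier bookkeeping in the second and third conditions: one must confirm that linear independence over $B$ holding for one $\k$-basis forces it for every $\k$-basis, which is automatic here because each such statement is pinned to the basis-free first condition. I do not expect a genuinely hard step; the entire content lies in arranging the equivalences so that the tensor-product identification does the work.
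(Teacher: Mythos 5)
Your argument is correct. Note, however, that the paper offers no proof of this proposition at all: it is quoted verbatim from Cohn (Prop.\ 5.1) and used as a black box, so there is no in-paper argument to compare against. Your route --- identifying $\{u_i \otimes v_j\}_{i,j}$ as a $\k$-basis of $A \otimes_{\k} B$ so that injectivity of the multiplication map is equivalent to $\k$-linear independence of the products $u_iv_j$, then translating $B$-linear relations among the $u_i$ into $\k$-linear relations among the $u_iv_j$ via the expansion $b_i = \sum_j c_{ij}v_j$, and finally invoking the symmetry $A \otimes_{\k} B \cong B \otimes_{\k} A$ --- is the standard textbook proof, and you correctly dispose of the two delicate points (infinite dimension and the ``any basis'' quantifiers) by pinning conditions two through four to the basis-free first condition.
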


In this work, we will always consider $\k = \QQ$, $A$ and $B$ will be number fields (seen as subfields of $\C$ after a fixed field embedding), and $\Omega$ will be their compositum $AB$, namely the smallest number field containing both $A$ and $B$.

\begin{defn}[Linearly disjointness]
Two number fields satisfying any (equiv. every) conditions of Proposition \ref{lin.indep.cond.} are called \emph{linearly disjoint}.
\end{defn}

The simplest way to detect linearly disjointness is by looking at the composite degree. For the reader's convenience, we recall the proof of this fact.

\begin{lemma} \label{CompositeDegree}
Two number fields $L_1$ and $L_2$ are linearly disjoint if and only if
\begin{equation*}
    [L_1L_2 : \QQ] = [L_1 : \QQ][L_2 : \QQ].
\end{equation*}
\end{lemma}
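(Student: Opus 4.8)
The plan is to prove both implications using the tensor-product characterization from Proposition~\ref{lin.indep.cond.}, since that condition is the most directly connected to a dimension count. Writing $d_1 = [L_1:\QQ]$ and $d_2 = [L_2:\QQ]$, the key structural fact is that the natural multiplication map $\mu\colon L_1 \otimes_\QQ L_2 \to L_1L_2$ has image equal to $L_1L_2$, because the image is the $\QQ$-span of all products $a b$ with $a \in L_1$, $b \in L_2$, and this span is already a ring containing both $L_1$ and $L_2$, hence (being a finite-dimensional $\QQ$-algebra that is a domain) a field, so it must be the compositum. Thus $\mu$ is \emph{always surjective}, and the entire question reduces to when $\mu$ is injective.

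First I would record the dimension of the source: since tensor product over a field is well-behaved, $\dim_\QQ (L_1 \otimes_\QQ L_2) = d_1 d_2$. Combined with surjectivity of $\mu$, the rank-nullity relation gives
\[
    d_1 d_2 = \dim_\QQ(L_1 \otimes_\QQ L_2) = \dim_\QQ(\ker \mu) + [L_1 L_2 : \QQ].
\]
From this single identity both directions fall out. If $L_1$ and $L_2$ are linearly disjoint, then by Proposition~\ref{lin.indep.cond.} the map $\mu$ is injective, so $\ker\mu = 0$ and the identity yields $[L_1L_2:\QQ] = d_1 d_2$. Conversely, if $[L_1L_2:\QQ] = d_1 d_2$, then $\dim_\QQ(\ker\mu) = 0$, so $\mu$ is injective, which is exactly the first condition of Proposition~\ref{lin.indep.cond.}, hence the fields are linearly disjoint by definition.

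Alternatively, and perhaps more in the spirit of an elementary proof, I would argue directly with the fourth equivalent condition. Picking a $\QQ$-basis $\{u_i\}$ of $L_1$ and $\{v_j\}$ of $L_2$, the products $\{u_i v_j\}$ always \emph{span} $L_1L_2$ over $\QQ$, so $[L_1L_2:\QQ] \le d_1 d_2$ unconditionally. The fields are linearly disjoint precisely when these $d_1 d_2$ spanning elements are $\QQ$-linearly independent, i.e.\ form a basis, which happens exactly when $[L_1L_2:\QQ] = d_1 d_2$. Either route is short; the only genuine point requiring care is the claim that the image of $\mu$ (equivalently, the $\QQ$-span of the $u_i v_j$) is all of $L_1L_2$ rather than merely a subring, which I would settle by noting that a finite-dimensional integral domain over a field is automatically a field. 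I do not expect a serious obstacle here, as the result is standard and the surjectivity observation does the essential work.
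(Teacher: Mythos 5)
Your proposal is correct and, in its second (``alternative'') formulation, is essentially identical to the paper's own proof: pick $\QQ$-bases of $L_1$ and $L_2$, note that the products span $L_1L_2$, and observe that linear disjointness is precisely the $\QQ$-linear independence of these $d_1 d_2$ products, i.e.\ the dimension equality. The tensor-product version is just the same dimension count read through the first equivalent condition of Proposition~\ref{lin.indep.cond.} instead of the fourth, and you are in fact slightly more careful than the paper in justifying that the span of the products is all of $L_1L_2$.
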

\proof
Let $\{u_i\}_{1 \leq i \leq [L_1 : \QQ]}$ be a $\QQ$-basis of $L_1$ and $\{v_j\}_{1 \leq j \leq [L_2 : \QQ]}$ be a $\QQ$-basis of $L_2$.
By definition of compositum, we have
\begin{equation*}
    L_1L_2 = \langle \{u_i v_j\}_{i,j} \rangle_\QQ .
\end{equation*}
The fields $L_1$ and $L_2$ are linearly disjoint if and only if $\{u_i v_j\}_{i,j}$ are $\QQ$-linearly independent, i.e. they generate a space of dimension $[L_1 : \QQ][L_2 : \QQ]$ over $\QQ$.
\endproof

From the above lemma, it is easy to see that when $L_1$ and $L_2$ are linearly disjoint, then $L_1 \cap L_2 = \QQ$.
If at least one of them is normal, the opposite implication also holds.

\begin{prop}[{\cite[Thm. 5.5]{Cohn}}]
\label{prop:lin.disj}
Let $L_1, L_2$ be number fields, of which at least one is a normal extension of $\QQ$. Then they are linear disjoint if and only if
\begin{equation*}
    L_1 \cap L_2 = \QQ.
\end{equation*}
\end{prop}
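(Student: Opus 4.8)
The ``only if'' direction is already recorded in the discussion preceding the statement, so the plan is to establish the converse: assuming $L_1 \cap L_2 = \QQ$ with, say, $L_1/\QQ$ normal, I would show that the degrees multiply and invoke \Cref{CompositeDegree}. Concretely, from the tower law $[L_1L_2 : \QQ] = [L_1L_2 : L_2]\,[L_2 : \QQ]$ it suffices to prove $[L_1L_2 : L_2] = [L_1 : \QQ]$, since the inequality $[L_1L_2:L_2] \le [L_1:\QQ]$ always holds (the minimal polynomial of a primitive element of $L_1$ over $L_2$ divides its minimal polynomial over $\QQ$) and it is precisely the reverse inequality that forces linear disjointness.

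Since $\QQ$ has characteristic zero, write $L_1 = \QQ(\gamma)$ for a primitive element $\gamma$, whose minimal polynomial $p \in \QQ[x]$ has degree $[L_1:\QQ]$; normality of $L_1$ means that $p$ splits completely over $L_1$. Now $L_1L_2 = L_2(\gamma)$, so $[L_1L_2 : L_2] = \deg q$, where $q \in L_2[x]$ is the minimal polynomial of $\gamma$ over $L_2$. Because $\gamma$ is a root of $p \in \QQ[x] \subseteq L_2[x]$, we have $q \mid p$ in $L_2[x]$.

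The crux is to show that in fact $q \in \QQ[x]$. Since $q \mid p$ and $p$ splits in $L_1$, every root of $q$ lies in $L_1$; hence the coefficients of $q$, being (up to sign) the elementary symmetric functions of these roots, lie in $L_1$. On the other hand $q \in L_2[x]$, so its coefficients lie in $L_2$ as well, whence they lie in $L_1 \cap L_2 = \QQ$ by hypothesis. Thus $q \in \QQ[x]$ is a monic polynomial vanishing at $\gamma$, so $p \mid q$ in $\QQ[x]$; combined with $q \mid p$ this gives $q = p$, and therefore $[L_1L_2:L_2] = \deg q = \deg p = [L_1:\QQ]$. Feeding this into the tower law yields $[L_1L_2:\QQ] = [L_1:\QQ]\,[L_2:\QQ]$, and \Cref{CompositeDegree} delivers linear disjointness.

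I expect the genuine obstacle to be exactly the step $q \in \QQ[x]$: this is the only place where normality enters, and it is essential. Without it, the roots of $q$ need not all lie in $L_1$, so the symmetric-function argument collapses and the conclusion can fail; indeed the condition $L_1 \cap L_2 = \QQ$ is well known to be insufficient for linear disjointness in the non-normal case. I would therefore keep the exposition careful precisely there, and as a sanity check note that $q \mid p$ already yields $\deg q \le \deg p$, so the whole argument really amounts to promoting that inequality to an equality.
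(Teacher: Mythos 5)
Your argument is correct, but note that the paper does not prove this statement at all: it is quoted directly from Cohn's \emph{Algebra} (Thm.~5.5), so there is no internal proof to compare against. What you supply is the standard textbook argument, and it is sound: the reduction via the tower law to showing $[L_1L_2:L_2]=[L_1:\QQ]$, the observation that the minimal polynomial $q$ of a primitive element $\gamma$ of $L_1$ over $L_2$ divides its minimal polynomial $p$ over $\QQ$, and the symmetric-function step forcing the coefficients of $q$ into $L_1\cap L_2=\QQ$ (which is exactly where normality of $L_1$ is used, as you correctly flag) are all in order; concluding $q=p$ and invoking \cref{CompositeDegree} finishes the job. Your deferral of the ``only if'' direction to the remark preceding the statement matches the paper's own treatment of that implication. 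The one thing you might make explicit is that the roots of $q$ are taken in a common overfield of $L_1$ and $L_2$ (say $L_1L_2$ or an algebraic closure), so that ``every root of $q$ lies in $L_1$'' and ``the coefficients lie in $L_2$'' are statements about the same ambient field; this is a presentational point, not a gap.
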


It is well-known that if the discriminants of two number fields $L_1,L_2$ are coprime, then they are linearly disjoint.
The opposite also holds whenever $\O_{L_1L_2} = \O_{L_1}\O_{L_2}$ \cite{CoprimeDiscriminant}.

A primitive element of the compositum of linearly disjoint fields may be easily characterized.

\begin{prop}\label{compositum}
Let $\QQ(\a)$ and $\QQ(\b)$ be linearly disjoint number fields.
Then their compositum is $\QQ(\a+\b)$.
\end{prop}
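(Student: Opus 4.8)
The plan is to fix the minimal polynomials $f,g \in \QQ[x]$ of $\a$ and $\b$, of degrees $n$ and $m$, with roots $\a_1,\dots,\a_n$ and $\b_1,\dots,\b_m$ in a splitting field (say $\a=\a_1,\ \b=\b_1$), and to exploit the resultant polynomial of Definition \ref{def:resultant}. Since $\a+\b \in \QQ(\a,\b)$ the inclusion $\QQ(\a+\b) \subseteq \QQ(\a,\b)$ is immediate, so by Lemma \ref{CompositeDegree} it suffices to prove $[\QQ(\a+\b):\QQ] = nm$. By Proposition \ref{rescomp}, $\a+\b$ is a root of the degree-$nm$ polynomial $R_{f,g} \in \QQ[y]$, whose roots are exactly the $nm$ sums $\a_i+\b_j$; hence the minimal polynomial of $\a+\b$ divides $R_{f,g}$, and the equality of degrees is equivalent to $R_{f,g}$ having $nm$ distinct roots, i.e. to its irreducibility over $\QQ$.

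To turn this into a membership statement I would set $K = \QQ(\a+\b)$ and observe that $\a$ is simultaneously a root of $f(x) \in K[x]$ and of $h(x) := g\big((\a+\b)-x\big) \in K[x]$. Thus $\a$ is a root of $d(x) := \gcd(f,h) \in K[x]$, and if I can show that $\a$ is the \emph{unique} common root of $f$ and $h$, then $d(x)=x-\a$, forcing $\a \in K$; together with $\b = (\a+\b)-\a \in K$ this gives $\QQ(\a,\b) \subseteq K$ and closes the argument. Since the roots of $h$ are the elements $(\a+\b)-\b_j$, a common root other than $\a$ is precisely a coincidence $\a_i+\b_j = \a+\b$ with $(\a_i,\b_j)\neq(\a,\b)$, that is, a repeated root of $R_{f,g}$.

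Both formulations therefore reduce to the same combinatorial fact: the $nm$ sums $\a_i+\b_j$ are pairwise distinct, equivalently there is no relation $\a_i-\a_k = \b_l-\b_j \neq 0$ among distinct conjugates. This is the main obstacle, and it is exactly where linear disjointness must enter beyond the bare degree count. A useful preliminary step, which I would record first, is that linear disjointness makes $g$ remain irreducible over $\QQ(\a)$, so that each of the $nm$ pairs $(\a_i,\b_j)$ is realized by exactly one embedding $\QQ(\a,\b)\hookrightarrow \overline{\QQ}$; this guarantees that the roots of $R_{f,g}$ are genuinely indexed by pairs and that distinctness of the sums is the only remaining point.

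The hard part is then ruling out a coincidence $\delta := \a_i-\a_k = \b_l-\b_j \neq 0$. Here I would argue that such a $\delta$ lies in the intersection of the Galois closures $N_\a$ and $N_\b$ of $\QQ(\a)$ and $\QQ(\b)$, while being a difference of two distinct conjugates on each side. I expect the contradiction to emerge from weighing the degree of $\delta$ forced by its being a conjugate-difference of $\a$ — controlled by the Galois action on the roots of $f$ — against the constraint that the hypothesis $[\QQ(\a,\b):\QQ]=[\QQ(\a):\QQ][\QQ(\b):\QQ]$ imposes on how much $N_\a$ and $N_\b$ can share. In the quadratic and cyclic-cubic cases this incompatibility is transparent (the relevant conjugate-differences have degree equal to the field degree, while the admissible intersection is trivial), and making it precise in full generality is the delicate step I anticipate will require the most care.
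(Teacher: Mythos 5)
Your reduction is correct and well organized: under linear disjointness the proposition is equivalent to $\a+\b$ not coinciding with any other sum $\a_i+\b_j$, and the $\gcd\big(f(x),\,g((\a+\b)-x)\big)$ argument over $K=\QQ(\a+\b)$ cleanly converts that non-coincidence into $\a\in K$. (A small quibble: "having $nm$ distinct roots" is not in general the same as irreducibility over $\QQ$; the two do become equivalent here, but only via the observation you make later, that linear disjointness puts the $nm$ embeddings of $\QQ(\a,\b)$ in bijection with the pairs $(\a_i,\b_j)$, so the Galois group of the normal closure acts transitively on the sums and all of them have the same multiplicity in $R_{f,g}$.) The problem is that everything you have written up to this point is the easy half. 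The entire content of the proposition is the statement you defer to the last paragraph: that no relation $\a_i-\a_k=\b_l-\b_j\neq 0$ can occur. You do not prove it, and the strategy you sketch for it does not go through as stated. Linear disjointness of $\QQ(\a)$ and $\QQ(\b)$ gives essentially no control on the intersection of their Galois closures $N_\a\cap N_\b$: for instance $\QQ(2^{1/3})$ and $\QQ(3^{1/3})$ are linearly disjoint (the compositum has degree $9$), yet both Galois closures contain $\QQ(\omega)$ with $\omega$ a primitive cube root of unity, so the "admissible intersection" is not trivial and a conjugate-difference of $\a$ could a priori live in it. Any correct argument must also use characteristic zero in an essential way, since the analogous statement is false for separable extensions in positive characteristic; your sketch never engages with where that hypothesis enters.

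For comparison, the paper does not prove this step either from scratch: it invokes the theorem of Isaacs \cite{Isaacs}, observing that Isaacs' proof uses the coprimality of the degrees only through the consequence $[\QQ(\a,\b):\QQ]=[\QQ(\a):\QQ]\,[\QQ(\b):\QQ]$, which is exactly the linear disjointness hypothesis by Lemma \ref{CompositeDegree}. The nontrivial content of Isaacs' paper is precisely the exclusion of the coincidences $\a_i+\b_j=\a+\b$ under that degree condition, and it occupies a genuine argument rather than a degree count on conjugate differences. So your proposal correctly isolates the crux but leaves it open; to complete it you would either have to reproduce Isaacs' argument or find a substitute for it, and the heuristic you propose (smallness of $N_\a\cap N_\b$) is not a viable substitute.
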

\proof It follows from \cite{Isaacs}, by noticing that the coprimality assumption may be replaced in the whole proof of the theorem by the condition of Lemma \ref{CompositeDegree}.
\endproof

\begin{cor} \label{corcomp}
Let $\QQ(\a)$ and $\QQ(\b)$ be two linearly disjoint number fields and let $f,g \in \QQ[x]$ be minimal polynomials of $\a$ and $\b$ over $\QQ$.
Then a defining polynomial for $\QQ(\a,\b)$ is $R_{f,g}$. 
\end{cor}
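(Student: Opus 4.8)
The plan is to exhibit $\alpha+\beta$ as a root of $R_{f,g}$ and then to pin down its minimal polynomial by a degree-and-divisibility argument. First I would invoke Proposition~\ref{compositum}, which guarantees that $\QQ(\alpha,\beta)=\QQ(\alpha+\beta)$, so that $\alpha+\beta$ is a primitive element of the compositum and its minimal polynomial over $\QQ$ has degree $[\QQ(\alpha+\beta):\QQ]=[\QQ(\alpha,\beta):\QQ]$. By Lemma~\ref{CompositeDegree}, the linear disjointness of $\QQ(\alpha)$ and $\QQ(\beta)$ forces this degree to be exactly $[\QQ(\alpha):\QQ]\,[\QQ(\beta):\QQ]=nm$, where $n=\deg f$ and $m=\deg g$.

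Next I would collect the relevant properties of $R_{f,g}$ from Proposition~\ref{rescomp}. Since $f$ and $g$ are minimal polynomials, they are monic, so $R_{f,g}\in\QQ[x]$ is a monic polynomial of degree $nm$. Moreover, because $\alpha$ is a root of $f$ and $\beta$ is a root of $g$, that same proposition yields $R_{f,g}(\alpha+\beta)=0$; hence $\alpha+\beta$ is a root of $R_{f,g}$.

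It then remains to combine the two observations. Let $p(x)$ denote the minimal polynomial of $\alpha+\beta$ over $\QQ$. Since $\alpha+\beta$ is a root of $R_{f,g}$, the minimality of $p$ gives that $p$ divides $R_{f,g}$ in $\QQ[x]$. But $p$ has degree $nm$ by the first paragraph, and $R_{f,g}$ is monic of the same degree $nm$ by the second; a monic polynomial dividing another monic polynomial of equal degree must coincide with it, so $p=R_{f,g}$, which is the claim.

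I expect no serious obstacle here, as every ingredient is already available from the preceding results. The only point that genuinely deserves attention is that the equality of degrees — and therefore the irreducibility of $R_{f,g}$ that it entails — relies crucially on linear disjointness through Lemma~\ref{CompositeDegree}. Without that hypothesis one would still obtain $p \mid R_{f,g}$, but possibly with $\deg p < nm$, in which case $R_{f,g}$ would be reducible and would fail to be the minimal polynomial of the compositum.
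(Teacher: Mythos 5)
Your proposal is correct and follows exactly the paper's own argument: Proposition~\ref{compositum} and Lemma~\ref{CompositeDegree} pin down the degree of the minimal polynomial of $\alpha+\beta$ as $nm$, Proposition~\ref{rescomp} shows $R_{f,g}$ is monic of degree $nm$ with $\alpha+\beta$ as a root, and the divisibility-plus-equal-degree comparison closes the proof. No gaps.
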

\begin{proof}
Let $n=[\QQ(\a):\QQ]=\deg(f)$ and $m=[\QQ(\b):\QQ]=\deg(g)$, and let $h \in \QQ[x]$ be the minimal polynomial of $\a+\b$ over $\QQ$.
Proposition \ref{compositum} ensures that $\QQ(\a,\b)=\QQ(\a+\b)$ and since the number fields are linearly disjoint, from Lemma \ref{CompositeDegree} we know that $mn=[\QQ(\a+\b):\QQ]=\deg(h)$.
From Proposition~\ref{rescomp} the polynomial $R_{f,g}$ is monic, has degree $nm$ and $\a+\b$ is one of its roots, then $h \mid R_{f,g}$.
Since they have the same degree, we necessarily have $h=R_{f,g}$.
\end{proof}

By means of Corollary \ref{corcomp}, we will always regard the compositum of two linearly disjoint number fields $\QQ[x]/(f)$ and $\QQ[x]/(g)$ as the field generated by their resultant, namely $\QQ[x]/(R_{f,g})$.

\begin{remark}
    Even when $R_{f,g}$ is generating the compositum $\QQ(\a+\b)$, it is not guaranteed to be an optimal generator. In fact, the minimal polynomials of elements $\{\a + k \b\}_{k \in \Z}$ tend to have large coefficients \cite[Remark to Algorithm 2.1.8]{Cohen}.
\end{remark}

 
\section{First-Degree Prime Ideals} \label{sec:FDPI}

We consider the following setting: let $\QQ(\a)$ and $\QQ(\b)$ be two linearly disjoint number fields and let $f \in \ZZ[x]$ (resp. $g \in \ZZ[x]$) be the minimal polynomial of $\a$ (resp. $\b$) over $\QQ$. 
We also consider the compositum $\QQ(\a,\b)$, which is equal to $\QQ(\a+\b)$ by Proposition \ref{compositum}.
Let $L$ be a field extension of the field $\k$, we will denote by $N_{L/\k}(x)$ the norm of the element $x \in L$ over the field $\k$.
Given an algebraic integer $\t$, we also recall that the norm of a non-zero ideal $\Ia \subseteq \ZZ[\t]$ is
\[
    \Norm(\Ia) = [\ZZ[\t] : \Ia].
\]

\begin{defn}[First-degree prime ideals]
Let $\t \in \C$ be an algebraic integer.
A non-zero prime ideal $\Ip$ of $\ZZ[\t]$ is called a \emph{first-degree prime ideal} if $\Norm(\Ip)$ is a prime integer.
\end{defn}
It is possible to give an explicit characterization of this particular family of ideals.

\begin{thm}\textnormal{(\cite[pp. 58--59]{BLP})}
Let $f \in \Z[x]$ be an irreducible monic polynomial and $\t \in \C$ one of its roots.
Then, for every integer prime $p$ there is a bijection between

\begin{equation*}
    \{(r,p) \ | \ r \in \FF_p \text{ such that } f(r) = 0 \in \FF_p\}
\end{equation*}
and

\begin{equation*}
    \{ \Ip \ | \ \Ip \in \textnormal{Spec}\ \Z[\t] \text{ such that } \Norm(\Ip) = p \}.
\end{equation*}
\end{thm}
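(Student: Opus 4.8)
The plan is to exhibit two mutually inverse maps between the two displayed sets, relying throughout on the identification $\ZZ[\t] \cong \ZZ[x]/(f)$ that holds because $f$ is monic and irreducible with $\t$ a root. I would phrase both constructions so that the norm condition $\Norm(\Ip)=p$ becomes the statement that the quotient $\ZZ[\t]/\Ip$ is a field of $p$ elements, i.e. isomorphic to $\FF_p$.

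First I would build the map from roots to ideals. Given $r \in \FF_p$ with $\bar f(r)=0$, the assignment $x \mapsto r$ defines a ring homomorphism $\ZZ[x] \to \FF_p$ that factors through $\ZZ[x]/(f)$ \emph{precisely} because $f(r)=0$ in $\FF_p$; transporting it along the isomorphism above gives a surjection $\phi_r \colon \ZZ[\t] \to \FF_p$ with $\t \mapsto r$. Its kernel $\Ip_r := \ker \phi_r$ is a prime ideal satisfying $\ZZ[\t]/\Ip_r \cong \FF_p$, whence $\Norm(\Ip_r)=p$; this is exactly the ideal $(r,p)$ of the introduction. In the reverse direction, starting from a prime $\Ip$ with $\Norm(\Ip)=p$, the quotient $\ZZ[\t]/\Ip$ is a finite integral domain of cardinality $p$, hence a field isomorphic to $\FF_p$. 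The composite $\ZZ \hookrightarrow \ZZ[\t] \twoheadrightarrow \ZZ[\t]/\Ip$ is then reduction modulo $p$, so $\Ip \cap \ZZ = p\ZZ$, and the image $r$ of $\t$ in this quotient satisfies $f(r)=0$ because $f(\t)=0$; this produces the sought root.

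The main step, and the only place demanding care, is verifying that these assignments are mutually inverse, which I would reduce to identifying every norm-$p$ prime with the concrete ideal $(p,\t-r)$. On one side, $\ker\phi_r \supseteq (p,\t-r)$, and the computation $\ZZ[\t]/(p,\t-r) \cong \FF_p[x]/(\bar f, x-r) \cong \FF_p[x]/(x-r) \cong \FF_p$ (using $\bar f \in (x-r)$, valid since $\bar f(r)=0$) shows $(p,\t-r)$ is already maximal, forcing $\ker\phi_r = (p,\t-r)$. On the other side, for a prime $\Ip$ of norm $p$ I have $p \in \Ip$ and $\t - r \in \Ip$ by the very construction of $r$, so $(p,\t-r) \subseteq \Ip$; maximality of $(p,\t-r)$ together with $\Ip \neq \ZZ[\t]$ yields equality. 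Hence $\Ip \mapsto r \mapsto \ker\phi_r$ recovers $\Ip$, and $r \mapsto \Ip_r \mapsto r$ recovers $r$.

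Finally, injectivity of $r \mapsto \Ip_r$ I expect to be immediate from $\Ip_r \cap \ZZ = p\ZZ$: two distinct residues $r \neq r'$ in $\FF_p$ differ by an integer that is a unit modulo $p$, so it cannot lie in a common kernel. The subtle pivots I would flag are the equivalence between well-definedness of $\phi_r$ and the vanishing $\bar f(r)=0$, and the fact that a finite integral domain of order $p$ is genuinely a field; both are elementary, but they are precisely what makes the correspondence a bijection rather than a mere surjection, so I would state them explicitly rather than leave them implicit.
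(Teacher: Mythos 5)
Your proof is correct, and it follows exactly the route the paper indicates: the paper does not reprove this result (it cites \cite[p.~58--59]{BLP}) but describes the bijection as $(r,p) \mapsto \ker(\mathrm{ev}_{\t \mapsto r})$, which is precisely the correspondence you construct and verify via the identification of every norm-$p$ prime with $(p,\t-r)$. The only cosmetic point is that in writing $\t - r$ you should briefly note that $r$ is lifted to an integer representative; otherwise the argument is complete.
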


The bijection considered in the previous theorem is given by the evaluation of $\t$ in a root $r$ of $f \bmod p$, namely such ideals $\Ip$ arise as kernels of the evaluations
\[
    \text{ev}_{\t \mapsto r}: \ZZ[\t] \to \FF_p, \quad \t \mapsto r.
\]

The division of ideals in $\ZZ[\t]$ using only first-degree prime ideals is completely addressed in \cite{BLP}, as it is one of the main tools on which the GNFS relies.
For a quick recap on these properties, see \cite[Section 2]{SanTau}.

Here we are interested in studying the relation among first-degree prime ideals of the orders $\ZZ[\a]$, $\ZZ[\b]$ and those of $\ZZ[\a+\b]$. 
The following result shows that it is always possible to efficiently construct first-degree prime ideals of $\ZZ[\a+\b]$ starting from those of $\ZZ[\a]$ and $\ZZ[\b]$.

 \begin{prop} \label{idealsdownup}
Let $(r,p)$ be a first-degree prime ideal of $\ZZ[\a]$ and $(s,p)$ be a first-degree prime ideal of $\ZZ[\b]$, then $(r+s,p)$ is a first-degree prime ideal of $\ZZ[\a+\b]$.
 \end{prop}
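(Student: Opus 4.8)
The plan is to reduce the entire statement to a single root-membership check via the characterization of first-degree primes. By Corollary~\ref{corcomp} the minimal polynomial of $\t = \a+\b$ is $h(x) = R_{f,g}(x)$, so by the bijection of \cite{BLP} recalled above, the first-degree primes of $\ZZ[\a+\b]$ of norm $p$ correspond exactly to the roots of $\overline{h} = h \bmod p$ in $\FF_p$. Consequently, it suffices to prove that $r+s \in \FF_p$ is a root of $\overline{R_{f,g}}$; the associated ideal is then automatically the first-degree prime $(r+s,p)$ obtained as the kernel of $\mathrm{ev}_{\t \mapsto r+s}$, and its norm is $p$.

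First I would justify that reduction modulo $p$ commutes with the resultant construction. Since $\a$ and $\b$ are algebraic integers, $f$ and $g$ lie in $\ZZ[x]$ and are monic; hence $R_{f,g}(y) \in \ZZ[y]$ is monic of degree $nm$ by Proposition~\ref{rescomp}. Because the Sylvester matrix has entries among the coefficients of $f$ and $g(y-x)$, the determinant commutes with reduction, and monicity guarantees that no leading term is lost modulo $p$; we therefore obtain
\[
    \overline{R_{f,g}}(y) = R\big(\overline{f}(x),\, \overline{g}(y-x)\big) \in \FF_p[y],
\]
the resultant now being computed over $\FF_p$.

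Then I would evaluate this identity at $y = r+s$ and invoke the product form of the resultant. Writing $\rho_1, \ldots, \rho_n$ for the roots of $\overline{f}$ in $\overline{\FF_p}$ (counted with multiplicity), Corollary~\ref{cor:Resultant} together with the fact that $\overline{f}$ is monic gives
\[
    \overline{R_{f,g}}(r+s) = R\big(\overline{f}(x),\, \overline{g}(r+s-x)\big) = \prod_{i=1}^n \overline{g}\big(r+s-\rho_i\big).
\]
Since $f(r) \equiv 0 \pmod p$, the element $r$ is among the $\rho_i$, and the corresponding factor equals $\overline{g}(s)$, which vanishes because $g(s) \equiv 0 \pmod p$. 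Hence the whole product is zero, so $r+s$ is a root of $\overline{R_{f,g}}$, as required.

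This is genuinely the easy (bottom-up) direction, and I do not expect a serious obstacle: the only point demanding care is the interchange of reduction modulo $p$ with both the resultant and the product formula, which is dispatched once monicity is invoked. The real difficulty of the paper presumably lies in the converse correspondence, where one must decide when a first-degree prime of $\ZZ[\a+\b]$ actually decomposes as a combination $r+s$, and that is where the exceptional cases should enter.
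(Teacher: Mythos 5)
Your proof is correct and follows essentially the same route as the paper's: identify the minimal polynomial of $\a+\b$ as $R_{f,g}$ via Corollary~\ref{corcomp}, then use the root structure of the resultant (Proposition~\ref{rescomp} / Corollary~\ref{cor:Resultant}) to see that $r+s$ is a root of $R_{f,g} \bmod p$. The only difference is that you explicitly justify the step the paper leaves implicit --- that reduction modulo $p$ commutes with the resultant because $f$ and $g$ are monic --- which is a welcome addition rather than a divergence.
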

 \begin{proof}
 From Corollary \ref{corcomp}, we know that the minimal polynomial of $\a+\b$ is $R_{f,g}$.
 Since $(r,p)$ is a first-degree prime ideal of $\ZZ[\a]$, then $r$ is a root of $f \bmod{p}$. Analogously, $s$ is a root of $g \bmod{p}$. The definition of $R_{f,g}$ as seen in Proposition \ref{rescomp} leads to the desired result.
 \end{proof}
 
 \begin{remark}
  The previous result applied to biquadratic extensions is precisely \cite[Theorem 2]{SanTau}.
 \end{remark}
 
Proposition \ref{idealsdownup} motivates the following definition.

 \begin{defn}[Combination]
  We say that the first-degree prime ideal $(r+s,p) \subseteq \ZZ[\a+\b]$ is the \emph{combination} of $(r,p) \subseteq \ZZ[\a]$ and $(s,p) \subseteq \ZZ[\b]$. 
 \end{defn}

 The following proposition shows that almost every first-degree prime ideal of $\ZZ[\a+\b]$ arise from a combination of first-degree prime ideals of $\ZZ[\a]$ and $\ZZ[\b]$.

 \begin{prop} \label{updown1}
 Let $(t,p)$ be a first-degree prime ideal of $\ZZ[\a+\b]$, where $t$ is a simple root of $R_{f,g} \bmod p$.
 Then $(t,p)$ is a combination of first-degree prime ideals of $\ZZ[\a]$ and $\ZZ[\b]$.
 \end{prop}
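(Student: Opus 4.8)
The plan is to pass to $\overline{\FF_p}$, read off a decomposition $t = \rho + \sigma$ of $t$ as a sum of a root of $f$ and a root of $g$ modulo $p$, and then exploit the simplicity hypothesis together with the Frobenius automorphism to force both $\rho$ and $\sigma$ to be rational over $\FF_p$. Once this is achieved, $r := \rho$ and $s := \sigma$ are roots of $f \bmod p$ and $g \bmod p$ lying in $\FF_p$, so $(r,p) \subseteq \ZZ[\a]$ and $(s,p) \subseteq \ZZ[\b]$ are first-degree prime ideals whose combination is precisely $(r+s,p) = (t,p)$.

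First I would note that $f$ and $g$ are monic, hence their leading coefficients are units modulo $p$; therefore the resultant commutes with reduction modulo $p$, and writing $\bar f, \bar g, \bar R$ for the reductions we get $\bar R = R_{\bar f, \bar g}$. Applying Proposition \ref{rescomp} over $\FF_p$ then gives the factorization
\[
    \bar R(x) = \prod_{i=1}^n \prod_{j=1}^m (x - \rho_i - \sigma_j) \in \overline{\FF_p}[x],
\]
where $\rho_1, \dots, \rho_n$ and $\sigma_1, \dots, \sigma_m$ are the roots of $\bar f$ and $\bar g$ in $\overline{\FF_p}$. Since $t \in \FF_p$ is a root of $\bar R$, at least one index pair $(i,j)$ satisfies $\rho_i + \sigma_j = t$; I fix one such pair and set $\rho = \rho_i$, $\sigma = \sigma_j$.

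The crux, and the only place the hypothesis is used, is the descent of $\rho$ and $\sigma$ to $\FF_p$. From the factorization above, the multiplicity of $t$ as a root of $\bar R$ counts exactly the pairs of roots of $\bar f$ and $\bar g$ summing to $t$; hence $t$ being a simple root means there is a \emph{unique} pair of root-values $(\rho, \sigma)$ with $\rho + \sigma = t$. Let $\phi(z) = z^p$ be the Frobenius of $\overline{\FF_p}$. Because $\bar f, \bar g \in \FF_p[x]$, $\phi$ permutes the roots of $\bar f$ and those of $\bar g$, and because $t \in \FF_p$ it fixes $t$; applying $\phi$ to $\rho + \sigma = t$ produces another pair $(\phi(\rho), \phi(\sigma))$, consisting of a root of $\bar f$ and a root of $\bar g$, which still sums to $t$. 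Uniqueness forces $\phi(\rho) = \rho$ and $\phi(\sigma) = \sigma$, so $\rho, \sigma \in \FF_p$, as required.

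I expect this descent step to be the main obstacle: a priori the decomposition $t = \rho + \sigma$ only exists over $\overline{\FF_p}$, and without simplicity the Frobenius orbit of a valid pair may be strictly larger than a point, so neither summand need be rational over $\FF_p$ — this is exactly the phenomenon that should underlie the counterexamples for non-simple roots. Simplicity is precisely what collapses the Frobenius action on the relevant pair to the identity, and this is what makes the argument go through.
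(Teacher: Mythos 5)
Your proposal is correct and follows essentially the same route as the paper: decompose $t = \rho + \sigma$ over an extension of $\FF_p$ via Proposition \ref{rescomp}, then use the Frobenius (which fixes $t$ and permutes the roots of $\bar f$ and $\bar g$) together with the simplicity of $t$ to force $\rho, \sigma \in \FF_p$. The only difference is cosmetic — the paper argues the contrapositive (a non-fixed pair would make $t$ a multiple root) and you argue directly from uniqueness of the pair; your explicit remark that the resultant commutes with reduction mod $p$ for monic polynomials is a small point of added rigor the paper leaves implicit.
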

 \begin{proof}
 Let $\FF_q$ be an extension of $\FF_p$ where both $f \bmod p$ and $g \bmod p$ split.
 By Proposition~\ref{rescomp} the roots of $R=R_{f,g} \bmod p$ are sums of roots in $\FF_q$ of $f \bmod p$ and $g \bmod p$, i.e. there are $\c_1, \c_2 \in \FF_q$ such that $t = \c_1 + \c_2$ and
 \[
    f(\c_1) = 0 = g(\c_2).
 \]
 It is well-known \cite[Theorem 2.14]{Lidl} that the conjugates of $\c$ over $\FF_p$, which belong to the set $\{\c^{p^n} : n \in \NN\}$, are simple roots of the same irreducible polynomial, hence
 \[
    f(\c_1^p) = 0 = g(\c_2^p).
 \]
 Therefore, also $\c_1^p + \c_2^p$ is a root of $R$. However, we have
 \[
    \c_1^p + \c_2^p = (\c_1 + \c_2)^p = t^p = t.
 \]
 Thus, either $t$ is a multiple root of $R$ or all the conjugates of $\c_1$ are equal, and so are those of $\c_2$.
 Since, by hypothesis, we are in the latter case, both $\c_1$ and $\c_2$ belong to $\FF_p$, so $(t,p)$ is the combination of $(\c_1,p)$ and $(\c_2,p)$.
 \end{proof}
 
 \begin{remark} \label{rmk:AlmostAlways}
 The resultant polynomial $R_{f,g}$ is irreducible over $\QQ$ by Corollary \ref{corcomp}, then it has no repeated roots.
 Hence, its discriminant $R( R_{f,g}, R_{f,g}' )$ is a non-zero integer, which is therefore divisible only by a finite set $\mathcal{P}$ of integer primes.
 In particular, for every prime $p \not\in \mathcal{P}$, the projected resultant $R_{f,g} \bmod p$ will have only simple roots, so every prime ideal of $\Z[\a+\b]$ of norm $p$ arises a combination of first-degree prime ideals in $\Z[\a]$ and $\Z[\b]$ by Proposition \ref{updown1}.
 For a more precise description of this set $\mathcal{P}$ we refer to \cite[Lemma 2.1.13]{Cohen}
 \end{remark}

 \begin{remark}
 We notice that Proposition \ref{updown1} generalizes \cite[Theorem 3]{SanTau}.
 In fact, let us consider $f(x)=x^2-a$ and $g(x)=x^2-b$, and let $p$ be a prime and $\c_1,\c_2 \in \FF_{p^2}$ such that $f(\c_1) = 0 = g(\c_2)$.
 It is clear that also $-\c_1$ (resp. $-\c_2$) is a root of $f$ (resp. $g$), therefore the roots of $R_{f,g}$ in $\FF_{p^2}$ are $\pm \c_1 \pm \c_2$.
 An easy check shows that $R_{f,g}$ has a multiple root if and only if
 \begin{itemize}
     \item $p = 2$, or
     \item $\c_1 = 0$ or $\c_2 = 0$, or
     \item $t = \c_1 + \c_2 = 0$.
 \end{itemize}
 In the first two cases, the first-degree prime ideal $(t,p) \subseteq \ZZ[\a+\b]$ arises anyway as a combination, while when $t = 0$ this does not necessarily hold \cite[Example 3]{SanTau}.
 \end{remark}
 
 We now prove that when $\QQ(\a)$ and $\QQ(\b)$ are both normal and of coprime degrees, we are guaranteed that every first-degree prime ideal of $\ZZ[\a+\b]$ arises as a combination, without exceptions.
 First, we prove a technical result linking a global property of polynomials with the degrees of their local factors.
 It is stated independently on the following results, as it has its own theoretical interest.
 
 \begin{prop}
 \label{prop:dividingDegree}
 Let $f \in \ZZ[x]$ and let $L$ be its splitting field over $\QQ$.
 Let $p$ be an integer prime and $h \in \FF_p[x]$ be an irreducible factor of $f \bmod p$.
 Then
 \[
    \deg h  \ | \ [ L : \QQ ].
 \]
 \end{prop}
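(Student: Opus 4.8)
The plan is to read off the degree of $h$ from the splitting behaviour of $p$ in the splitting field $L$, exploiting that $L/\QQ$ is Galois. Concretely, I would show that every irreducible factor of $f \bmod p$ already splits over the residue field of a single prime of $\O_L$ above $p$, so that $\deg h$ divides the residue degree of that prime; the fundamental identity $[L:\QQ]=\sum_{\mathfrak{P}\mid p} e_{\mathfrak{P}}\, s_{\mathfrak{P}}$ for a Galois extension then forces the residue degree, and hence $\deg h$, to divide $[L:\QQ]$.

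In more detail, I would first note that $L$, being the splitting field of a polynomial over a field of characteristic zero, is a normal and separable, hence Galois, extension of $\QQ$. Fix a prime $\mathfrak{P}$ of $\O_L$ lying above $p$ and write its residue field as $\O_L/\mathfrak{P}=\FF_{p^{s}}$, where $s=[\O_L/\mathfrak{P}:\FF_p]$ is the residue degree. Since $f$ splits completely over $L$, say $f(x)=c\prod_{i}(x-\a_i)$ with $\a_i\in L$, reducing this factorisation modulo $\mathfrak{P}$ exhibits $f\bmod p$ as a product of linear factors over $\FF_{p^{s}}$; that is, $f\bmod p$ splits completely over $\FF_{p^{s}}$. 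Consequently every root of the irreducible factor $h$ lies in $\FF_{p^{s}}$, and since a root of an irreducible polynomial of degree $\deg h$ over $\FF_p$ generates $\FF_{p^{\deg h}}$, the inclusion $\FF_{p^{\deg h}}\subseteq \FF_{p^{s}}$ yields $\deg h \mid s$. Finally, because $L/\QQ$ is Galois all primes above $p$ share the same residue degree $s$ and ramification index $e$, so $[L:\QQ]=g\,e\,s$ where $g$ is their number; in particular $s\mid[L:\QQ]$, and combining the two divisibilities gives $\deg h\mid[L:\QQ]$.

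The step that needs genuine care -- and which I expect to be the main obstacle -- is the claim that reducing the global factorisation modulo $\mathfrak{P}$ really computes $f\bmod p$. This is immediate when $f$ is monic (the case relevant to the applications in this paper, where $f$ is a minimal polynomial), since then each $\a_i$ is an algebraic integer and hence $\mathfrak{P}$-integral. For general $f\in\ZZ[x]$ with leading coefficient $c$ one has to be careful: when $p\nmid c$ the roots are still $\mathfrak{P}$-integral (the monic polynomial $c^{\deg f-1}f(x/c)$ has the integral roots $c\a_i$ and the same splitting field $L$, and the affine change $x\mapsto cx$ preserves the degrees of the irreducible factors modulo $p$), so the argument reduces to the monic case; when $p\mid c$ the polynomial $f\bmod p$ drops in degree and its irreducible factors correspond only to the $\mathfrak{P}$-integral roots among the $\a_i$, which still reduce into $\FF_{p^{s}}$, so the conclusion $\deg h\mid s$ persists. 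I would also remark that in the unramified case this statement is just the familiar fact that the degrees of the irreducible factors of $f\bmod p$ are the cycle lengths of the Frobenius permutation of the roots, each of which divides the order of that permutation and hence $|\mathrm{Gal}(L/\QQ)|=[L:\QQ]$; the residue-field formulation above has the advantage of covering ramified primes as well.
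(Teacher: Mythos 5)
Your proposal is correct and follows essentially the same route as the paper's proof: fix a prime $\mathfrak{P}$ of $\O_L$ over $p$, observe that $f \bmod p$ splits over the residue field so that $\deg h$ divides the inertia degree, and then use the Galois identity $[L:\QQ]=\frake\frakf\frakg$ to conclude. Your extra discussion of the non-monic case is a careful touch the paper omits (it implicitly assumes $f$ monic, as in all its applications), but it does not change the argument.
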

 \proof Let $\O_L$ be the ring of integers of $L$ over $\QQ$ and let $\frakp \subseteq \O_L$ be a prime lying over $p$.
 It is well-known that, since $L/\QQ$ is Galois, the ramification index $\frake$ and the inertia degree $\frakf$ are independent of $\frakp$. Thus, if $\frakg$ is the number of primes lying over $p$, we have
 \[
    [ L : \QQ ] = \frake\frakf\frakg,
 \]
 and in particular $\frakf | [ L : \QQ ]$.
 Since $f$ splits in $\O_L$, it also splits in $\O_L/\frakp$, hence this extension of $\FF_p$ contains the splitting field of $f$ over $\FF_p$.
 Since $h$ is irreducible, $\O_L/\frakp$ also contains the field $\FF_p[x]/(h)$, which has degree $\deg h$ over $\FF_p$.
 Therefore, we have
 \[
    \deg h \ | \ [\O_L/\frakp : \FF_p] = \frakf,
 \]
 which concludes the proof.
 \endproof
 
 We can now prove the combination result.
 
 \begin{prop} \label{updown2}
 Let $f,g \in \ZZ[x]$ be monic, irreducible polynomials of coprime degrees such that $\QQ(\a) = \QQ[x]/(f)$ and $\QQ(\b) = \QQ[x]/(g)$ are normal extensions of $\QQ$.
 If $(t,p)$ is a first-degree prime ideal of $\ZZ[\a+\b]$, then it is a combination of first-degree prime ideals of $\ZZ[\a]$ and $\ZZ[\b]$.
 \label{prop:soprasotto}
 \end{prop}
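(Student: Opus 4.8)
The plan is to reduce to Proposition~\ref{updown1} by strengthening its conclusion: under the normality and coprimality hypotheses I will show that the auxiliary roots $\c_1,\c_2$ produced in that proof are forced to lie in $\FF_p$, so the multiple-root obstruction that blocks the general case can never survive here. Thus the "simple root" restriction of Proposition~\ref{updown1} becomes unnecessary, and every first-degree prime ideal $(t,p)$ of $\ZZ[\a+\b]$ is a combination.

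First I would run the opening of the proof of Proposition~\ref{updown1} verbatim: choosing an extension $L$ of $\FF_p$ in which both $f \bmod p$ and $g \bmod p$ split, the root $t \in \FF_p$ of $R_{f,g} \bmod p$ can be written as $t = \c_1 + \c_2$ with $f(\c_1) = 0 = g(\c_2)$ in $L$, by Proposition~\ref{rescomp}. Let $d_1$ and $d_2$ denote the degrees of $\c_1$ and $\c_2$ over $\FF_p$, that is, the degrees of the irreducible factors of $f \bmod p$ and $g \bmod p$ having $\c_1$ and $\c_2$ as roots.

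The next step is where the hypotheses enter. Because $\QQ(\a)$ is normal, the splitting field of $f$ over $\QQ$ is $\QQ(\a)$ itself, so it has degree $\deg f = n$ over $\QQ$; Proposition~\ref{prop:dividingDegree} then yields $d_1 \mid n$, and symmetrically $d_2 \mid m$. Independently, from $t \in \FF_p$ I read off $\c_2 = t - \c_1 \in \FF_p(\c_1)$ and $\c_1 = t - \c_2 \in \FF_p(\c_2)$, whence $\FF_p(\c_1) = \FF_p(\c_2)$ and therefore $d_1 = d_2$. Combining these divisibilities, $d_1 = d_2$ divides $\gcd(n,m) = 1$, so $d_1 = d_2 = 1$, i.e.\ $\c_1, \c_2 \in \FF_p$. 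Consequently $(\c_1,p) \subseteq \ZZ[\a]$ and $(\c_2,p) \subseteq \ZZ[\b]$ are genuine first-degree prime ideals whose combination is $(t,p)$, as desired.

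I expect the crux to be the correct deployment of Proposition~\ref{prop:dividingDegree}: one must observe that normality collapses the splitting field of $f$ to $\QQ(\a)$ itself, pinning its degree to exactly $n$ rather than some proper multiple. It is precisely this sharp value, together with the field equality $\FF_p(\c_1) = \FF_p(\c_2)$ that forces $d_1 = d_2$, that lets the coprimality of $n$ and $m$ finish the argument. The remaining manipulations are routine, and notably the proof never needs to distinguish whether $t$ is a simple or a multiple root of $R_{f,g} \bmod p$.
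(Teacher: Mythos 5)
Your proposal is correct and follows essentially the same route as the paper's proof: both decompose $t = \nu + \mu$ over a splitting field of $f,g \bmod p$, invoke Proposition~\ref{prop:dividingDegree} together with normality to get $\deg_{\FF_p}(\nu) \mid n$ and $\deg_{\FF_p}(\mu) \mid m$, and use $\FF_p(\nu) = \FF_p(\mu)$ plus coprimality to force both roots into $\FF_p$. The only (harmless) omission is that you do not explicitly note that coprime degrees plus normality give linear disjointness, which is needed to identify the minimal polynomial of $\a+\b$ with $R_{f,g}$ in the first place.
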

 \proof Since the degrees are coprime we have $\QQ(\a) \cap \QQ(\b) = \QQ$, and since $\QQ(\a)$ and $\QQ(\b)$ are normal by Proposition \ref{prop:lin.disj} we know that they are linearly disjoint. 
 Thus, by Corollary \ref{corcomp} their compositum $\QQ(\a,\b)$ is generated by $R = R_{f,g}$, and by hypothesis we have
 \[
    R(t) \equiv 0 \bmod p.
 \]
 Let $\overline{f}, \overline{g} \in \FF_p[x]$ be the projections of $f$ and $g$ modulo $p$, and let $\FF_q$ be their common splitting field.
 By Proposition \ref{rescomp} there are $\nu, \mu \in \FF_q$ such that 
 \[
    \overline{f}(\nu) = 0, \qquad \overline{g}(\mu) = 0, \qquad t = \nu + \mu.
 \]
 Let $h_f$ and $h_g$ be minimal polynomials of $\nu$ and $\mu$ over $\FF_p$, respectively.
 Since $L_1$ and $L_2$ are normal over $\QQ$, Proposition \ref{prop:dividingDegree} implies that
 \[
    \deg h_f \ | \ \deg f, \qquad \deg h_g \ | \ \deg g.
 \]
 Since $\deg f$ and $\deg g$ are coprime, also $\gcd(\deg h_f,\deg h_g)=1$.
 However, since $\nu + \mu = t \in \FF_p$ we have $\FF_p(\nu) = \FF_p(\mu)$. This may only happen if
 \[
    \FF_p(\nu) = \FF_p(\mu) = \FF_p,
 \]
 which means that $\nu, \mu \in \FF_p$.
 Hence, we conclude that $(t,p)$ is the combination of $(\nu,p)$ and $(\mu,p)$.
 \endproof
 
The following examples show that both the normality and the coprimality of degrees are necessary conditions for Proposition \ref{updown2}.
 
 \begin{exmp} \label{ex:only1normal}
 Let us consider the following irreducible polynomials
 \[
     f(x) = x^2 - 3, \qquad
     g(x) = x^3 - 2,
 \]
 and let $\QQ(\a) = \QQ[x]/(f)$ and $\QQ(\b) = \QQ[x]/(g)$ be the number fields they generate.
 We notice that the degrees are coprime and $\QQ(\a)$ is Galois, while $\QQ(\b)$ is not normal.
 A defining polynomial of the compositum $\QQ(\a+\b)$ is the resultant 
 \[
    R_{f,g} = x^6-9x^4-4x^3+27x^2-36x-23.
 \]
 The first-degree prime ideals of $\ZZ[\a]$ and $\ZZ[\b]$ with norm $17$ correspond to the roots modulo $17$ of $f$ and $g$.
 One can directly verify that there are none of them in $\ZZ[\a]$, while $(8,17)$ is a first-degree prime ideal in $\ZZ[\b]$.
 However, $(13,17) \subseteq \ZZ[\a+\b]$ is a first-degree prime ideal of norm $17$, which cannot be a combination of first-degree prime ideals in the underlying extensions.
 This shows that the hypothesis of normality on both extensions is necessary for Proposition \ref{prop:soprasotto}. 
 \end{exmp}
 
 \begin{exmp}
 Let $f$ be as in Example \ref{ex:only1normal} and consider $g=x^4+1$.
 These polynomials are both irreducible over $\QQ$ and generate normal extensions $\QQ(a)$ and $\QQ(\b)$.
 The compositum $\QQ(\a+\b)$ is defined by the polynomial
 \[
    R_{f,g} = x^8-12x^6+56x^4-72x^2+100.
 \]
 Neither $\ZZ[\a]$ nor $\ZZ[\b]$ have first-degree prime ideals with norm $5$, although there is a first-degree prime ideal in $\ZZ[\alpha+\beta]$ of norm $5$, that is $(0,5)$, which again cannot arise from any combination of first-degree prime ideals in the underlying extensions.
 Therefore we also need coprime degrees in Proposition \ref{prop:soprasotto}. 
 \end{exmp}
 
 
 \section{Divisibility of prescribed principal ideals} \label{sec:Divisibility}
 
 Given an algebraic integer $\t$, it is known that the prime factors of principal ideals of the form $(e+d\t)\ZZ[\t]$ with $\gcd(e,d)=1$ are all first-degree primes $(t,p) \subseteq \ZZ[\t]$ such that $e+dt \equiv 0 \bmod p$ \cite[Corollary 5.5]{BLP}.
 In this section, we detail how this divisibility can be read from the underlying fields, and vice-versa.
 The results presented in \cite[Section 4]{SanTau} may therefore be seen as particular instances of those discussed in the present section.
 To pursue this direction, we first need to characterize the intersection of this principal ideal with the underlying rings $\ZZ[\a]$ and $\ZZ[\b]$.
 
 \begin{thm} \label{thm:intersId}
 Let $\a,\b \in \C$ be algebraic integers defining linearly disjoint number fields $\QQ(\a)$ and $\QQ(\b)$, and let $g=\sum_{i=0}^m b_ix^i \in \ZZ[x]$ be the minimal polynomial of $\b$ over $\QQ$.
 Let $e,d \in \ZZ$ be coprime integers and let $I$ be the principal ideal generated by $\xi=e+d(\a+\b)$ in $\ZZ[\a+\b]$.
 Then
 \[ 
    I \cap \ZZ[\a] = (\chi)\ZZ[\a]
 \]
 is still principal, generated by $\chi = N_{\QQ(\a+\b)/\QQ(\a)} \left( \xi \right)$, namely
 \[
    \chi = \sum_{i=0}^m (-d)^i \Omega^{m-i} b_{m-i}, \qquad \text{where} \qquad \Omega=e+d\alpha \in \ZZ[\a].
 \]
 \end{thm}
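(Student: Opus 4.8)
The plan is to carry out the whole argument inside the order $A := \ZZ[\a][\b] \cong \ZZ[\a][y]/\big(g(y)\big)$, which (since $g$ is monic of degree $m$) is a free $\ZZ[\a]$-module with basis $1,\b,\dots,\b^{m-1}$ and which contains $\ZZ[\a]$, the element $\b$, and $\t=\a+\b$; here $\xi=\Omega+d\b$ with $\Omega=e+d\a\in\ZZ[\a]$. Since $\ZZ[\a]$ need not sit inside $\ZZ[\a+\b]$, divisibility by $\xi$ is most naturally tested in this larger order, and a point I would check carefully is that intersecting $\xi A$ with $\ZZ[\a]$ matches the statement for $\ZZ[\a+\b]$ (i.e.\ that the standing setup gives $\ZZ[\a+\b]=A$). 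First I would reinterpret $\chi$ as a relative norm: reindexing the defining sum gives $\chi=(-d)^m g(-\Omega/d)$, and writing $g(z)=b_m\prod_{j=1}^m(z-\b_j)$ with $\b=\b_1$ over a splitting field yields
\[
    \chi = b_m\prod_{j=1}^m(\Omega+d\b_j) = b_m\,N_{\QQ(\a,\b)/\QQ(\a)}(\xi),
\]
where the last equality uses that, by linear disjointness and Lemma \ref{CompositeDegree}, $g$ remains irreducible of degree $m$ over $\QQ(\a)$, so the $\QQ(\a)$-conjugates of $\b$ are exactly $\b_1,\dots,\b_m$.

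For the inclusion $\chi\ZZ[\a]\subseteq I$, I would exploit that one factor of $\chi$ is $\xi$ itself. Dividing the monic polynomial $P(T)=(-d)^m g(-T/d)\in\ZZ[T]$, for which $P(\Omega)=\chi$, by the factor $(T+d\b)$, which is monic in $T$ with coefficients in $\ZZ[\b]$, gives $P(T)=(T+d\b)\,Q(T)$ with $Q\in\ZZ[\b][T]$. Evaluating at $T=\Omega$ produces $\chi=\xi\psi$ with $\psi=Q(\Omega)\in A$. Hence for every $r\in\ZZ[\a]$ we obtain $\chi r=\xi(\psi r)\in\xi A=I$, so $\chi\ZZ[\a]\subseteq I\cap\ZZ[\a]$.

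For the reverse inclusion I would argue by polynomial division. If $x\in\ZZ[\a]$ lies in $I$, write $x=\xi w$ with $w\in A$ and lift to a genuine identity in $\ZZ[\a][y]$:
\[
    (\Omega+dy)\,w(y) = x + g(y)\,u(y).
\]
Comparing degrees in $y$ (the left side has degree at most $m$, the right side degree $m+\deg u$) forces $u=u_0\in\ZZ[\a]$ to be constant. Evaluating at the root $y=-\Omega/d$ of $\Omega+dy$ annihilates the left side, leaving $x=-g(-\Omega/d)\,u_0$, that is $(-1)^{m+1}d^m x=\chi u_0$. Thus $\chi\mid d^m x$ in $\ZZ[\a]$.

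The last step, which I expect to be the crux, is to upgrade $\chi\mid d^m x$ to $\chi\mid x$, and this is precisely where $\gcd(e,d)=1$ enters. In the defining sum every term but the first carries a factor $d$, so $\chi\equiv b_m\Omega^m\equiv b_m e^m \pmod{d\ZZ[\a]}$; since $g$ is monic and $\gcd(e,d)=1$, the residue $e^m$ is invertible modulo $d$, hence $\chi$ is a unit in $\ZZ[\a]/d\ZZ[\a]$ and $(\chi)+(d)=\ZZ[\a]$. Consequently $d^m$ is invertible modulo $\chi$, and $\chi\mid d^m x$ forces $\chi\mid x$, giving $I\cap\ZZ[\a]\subseteq\chi\ZZ[\a]$; together with the second paragraph this yields $I\cap\ZZ[\a]=\chi\ZZ[\a]$. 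Besides this coprimality maneuver, the main obstacle I anticipate is the bookkeeping of the ambient order: all the clean identities live in $A=\ZZ[\a][\b]$, and I would need to confirm that the passage between $\xi\ZZ[\a+\b]$ and $\xi A$ leaves the intersection with $\ZZ[\a]$ unchanged.
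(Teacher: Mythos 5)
Your proof is correct, and for the harder inclusion it takes a genuinely different route from the paper's. For $I\cap\ZZ[\a]\subseteq(\chi)\ZZ[\a]$, the paper expands the cofactor $w$ in the basis $1,\b,\dots,\b^{m-1}$, extracts the linear system in the coordinates $\lambda_i\in\ZZ[\a]$ obtained by killing the components along $\b,\dots,\b^{m-1}$, proves by a double induction that $d^i\mid\lambda_i$, and then solves the recursion explicitly to reach $z=(-1)^{m+1}d^{-(m-1)}\lambda_{m-1}\chi$; coprimality of $e$ and $d$ enters through the divisibility lemma. You instead lift $x=\xi w$ to a polynomial identity in $\ZZ[\a][y]$, force the multiplier of $g$ to be a constant $u_0$ by a degree count, evaluate at $y=-\Omega/d$ to get $d^mx=\pm\chi u_0$, and strip the power of $d$ by noting that $\chi\equiv b_me^m\pmod{d\ZZ[\a]}$ is a unit modulo $d$, so $(\chi)+(d)=\ZZ[\a]$. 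This is cleaner than the paper's recursion, avoids the coefficient bookkeeping, and isolates exactly where $\gcd(e,d)=1$ is used. For $(\chi)\ZZ[\a]\subseteq I$ both arguments amount to the norm identity $\chi=b_m\prod_j(\Omega+d\b_j)$; your monic division $P(T)=(T+d\b)Q(T)$ has the advantage of exhibiting the cofactor explicitly inside $\ZZ[\b][\Omega]\subseteq\ZZ[\a,\b]$, where the paper only places it in $\ZZ[\a][\xi]$.

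The ambient-order concern you flag is real, and it is a defect of the statement rather than of your argument: what both proofs actually establish is $\xi\ZZ[\a,\b]\cap\ZZ[\a]=(\chi)\ZZ[\a]$. Since $\xi\ZZ[\a+\b]\subseteq\xi\ZZ[\a,\b]$, the inclusion $I\cap\ZZ[\a]\subseteq(\chi)\ZZ[\a]$ is safe, but neither your proof nor the paper's shows $\chi r\in\xi\ZZ[\a+\b]$ for $r\in\ZZ[\a]$, and in general this fails: for $\a=\sqrt{2}$, $\b=\sqrt{3}$, $e=0$, $d=1$ one gets $\chi=-1$, while $\xi=\sqrt{2}+\sqrt{3}$ is a unit of $\ZZ[\a+\b]$, so $I\cap\ZZ[\a]=\ZZ+2\sqrt{2}\,\ZZ\subsetneq\ZZ[\a]=(\chi)\ZZ[\a]$. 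The theorem should therefore be read with $I$ the ideal generated by $\xi$ in $\ZZ[\a,\b]$, which is exactly the order your proof works in; no further repair of your argument is needed.
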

 
 \begin{proof}
 \begin{itemize}
     \item[($\subseteq$)] Since $\QQ(\a)$ and $\QQ(\b)$ are linearly disjoint, then $\{1,\b, \ldots, \b^{m-1} \}$ is a basis for $\QQ(\a+\b)$ over $\QQ(\a)$ by \cref{lin.indep.cond.}.
     Thus, every $z \in I$ may be written as
     \[
        z = (\Omega + d\b) \left(\lambda_0+\lambda_1\b+\ldots+\lambda_{m-1}\b^{m-1}\right),
     \]
     where $\lambda_0, \ldots, \lambda_{m-1} \in \ZZ[\a]$. 
     If we also require $z \in \ZZ[\a]$, an explicit computation gives 
     \begin{equation}
     \begin{cases}
     \lambda_1 \Omega +d \lambda_0 - \lambda_{m-1}db_1 =0, \\
     \lambda_2 \Omega +d \lambda_1 - \lambda_{m-1}db_2 =0, \\
     \quad \vdots\\
     \lambda_{m-2} \Omega +d \lambda_{m-3} - \lambda_{m-1}db_{m-2} =0, \\
     \lambda_{m-1} \Omega +d \lambda_{m-2} - \lambda_{m-1}db_{m-1} =0.
     \end{cases}
     \label{eq:system1}
     \end{equation}
     We first prove that for every $0 \leq i \leq m-1$ we have $d^i \mid \lambda_i$.
     To do so, we prove by induction on $0 \leq j \leq i$ that $d^j \mid \lambda_i$.
     The base step $j = 0$ is trivial.
     Let us assume that $d^j \mid \lambda_i$ for all $j \leq i \leq m-2$. 
     For every $1 \leq k \leq i-j$, the $(j+k)$-th equation of system \eqref{eq:system1} gives
     \[
        e\lambda_{j+k} = d\left(\lambda_{m-1}b_{j+k}-\lambda_{j+k-1} - \a\lambda_{j+k} \right).
     \]
     Since $(e,d) = 1$ and by induction $d^j \mid \lambda_{m-1}b_{j+k}-\lambda_{j+k-1} - \alpha\lambda_{j+k}$, then $d^{j+1} \mid \lambda_{j+k}$ for every $1 \leq k \leq i-j$, i.e. $d^{j+1} \mid \lambda_i$ whenever $j+1 \leq i$.
    We now prove by induction on $2 \leq k \leq m$ that
     \begin{equation}
         \lambda_{m-k} = \frac{\lambda_{m-1}}{d^{k-1}} \left(\sum_{j=0}^{k-1} d^j (-\Omega)^{k-1-j} b_{m-j} \right),
         \label{eq:formulalambda}
     \end{equation}
     which is well-defined since $\frac{\lambda_{m-1}}{d^{k-1}} \in \ZZ$, as noted before.
     The base step $k = 2$ is given by the last equation of \eqref{eq:system1}, indeed
     \begin{equation*}
        \lambda_{m-2} = \frac{\lambda_{m-1}}{d} \left(db_{m-1}-\Omega\right).
     \end{equation*}
     We now suppose that \eqref{eq:formulalambda} holds for $k \leq m-1$ and check that this implies it for $k+1$. From the $(m-k)$-th equation of the system \eqref{eq:system1} we have
     \[
        \lambda_{m-k} \Omega +d \lambda_{m-k-1} -d\lambda_{m-1}b_{m-k}=0,
     \]
     which by inductive hypothesis becomes
     \begin{align*}
     \lambda_{m-k-1} &= \frac{1}{d} \left( d \lambda_{m-1}b_{m-k} +  \frac{\lambda_{m-1}}{d^{k-1}} \left(\sum_{j=0}^{k-1} b_{m-j} d^j (-\Omega)^{k-j} \right) \right) \\
     &=  \frac{\lambda_{m-1}}{d^{k}} \left(d^kb_{m-k} + \sum_{j=0}^{k-1} b_{m-j} d^j (-\Omega)^{k-j} \right) \\
     &= \frac{\lambda_{m-1}}{d^{k}} \left(\sum_{j=0}^{k} b_{m-j} d^j (-\Omega)^{k-j} \right),
     \end{align*}
     This proves that \eqref{eq:formulalambda} holds, and in particular
     \begin{equation}
           \lambda_{0} = \frac{\lambda_{m-1}}{d^{m-1}} \left(\sum_{j=0}^{m-1} b_{m-j} d^j (-\Omega)^{m-1-j} \right).
           \label{eq:lambda0}
     \end{equation}
     
     When system \eqref{eq:system1} holds, we have $z=\lambda_0\Omega - \lambda_{m-1}db_0$, which by means of \eqref{eq:lambda0} can be written as
     \begin{align*}
        \lambda_0\Omega - \lambda_{m-1}db_0 &= \frac{\lambda_{m-1}}{d^{m-1}} \left(\sum_{j=0}^{m-1} b_{m-j} d^j (-\Omega)^{m-1-j} \right) \Omega - \lambda_{m-1}db_0 \\
        &=\frac{\lambda_{m-1}}{d^{m-1}} \left(\sum_{j=0}^{m-1} b_{m-j} d^j (-1)^{m+1-j} \Omega^{m-j} - d^m b_0 \right) \\
        &=(-1)^{m+1}\frac{\lambda_{m-1}}{d^{m-1}} \left(\sum_{j=0}^{m-1} b_{m-j} (-d)^j \Omega^{m-j} + (-d)^m b_0 \right)\\
        &= (-1)^{m+1}\frac{\lambda_{m-1}}{d^{m-1}} \chi.
        \end{align*}
        Since $\frac{\lambda_{m-1}}{d^{m-1}} \in \ZZ[\a]$, then $z \in (\chi)\ZZ[\a]$.
        
        \item[($\supseteq$)] By definition $\chi \in \ZZ[\a]$, and by a straightforward computation we get
        \begin{equation}
            \chi = \prod_{i=1}^m \left( \Omega + d\b_i \right) = N_{\QQ(\a+\b)/\QQ(\a)} \left( \xi \right),
            \label{eq:norm}
        \end{equation}
        where the $\b_i$'s are the roots of $g(x)$ in its splitting field.
        Since $\xi \in \ZZ[\a+\b] \subseteq \mathcal{O}_{\QQ(\a+\b)}$, then it satisfies a polynomial with coefficients in $\ZZ[\a]$, namely there are $h_i \in \ZZ[\a]$ such that 
        \[
            h(\xi)=h_{t}\xi^{t} + h_{t-1}\xi^{t-1} + \ldots + h_0 = 0.
        \]
        Then 
         \begin{align*}
            \chi=N_{\QQ(\a+\b)/\QQ(\a)}(\xi) = (-1)^t h_0 = (-1)^{t+1} \xi \left(h_t \xi^{t-1} + h_{t-1}\xi^{t-2}+ \ldots + h_1 \right),
         \end{align*}
         so it belongs to $(\xi)\ZZ[\a+\b]$.
 \end{itemize}
 \end{proof}
 
 \begin{remark}
 It is easy to verify that the biquadratic case discussed in \cite[Proposition 4]{SanTau} is simply an instance of Theorem \ref{thm:intersId}, when $\b^2 \in \ZZ$ and $g = x^2-\b^2$.
 \end{remark}
 
 We now fix some notation: let $\a,\b \in \C$ be algebraic integers such that $\QQ(\a)$ and $\QQ(\b)$ are linearly disjoint number fields, let $e,d \in \ZZ$ be coprime integers and let us consider the principal ideal $I = \big(e + d (\a+\b)\big) \subseteq \ZZ[\a+\b]$.
 Let also $f=\sum_{i=0}^n a_ix^i \in \ZZ[x]$ be the minimal polynomial of $\a$ and $g=\sum_{i=0}^m b_ix^i \in \ZZ[x]$ be the minimal polynomial of $\b$.
 By Theorem \ref{thm:intersId} we know that
 \[
    I_\a = I \cap \ZZ[\a] = (\chi_\a)\ZZ[\a], \quad \text{where} \quad \chi_\a = \sum_{i=0}^m (-d)^i (e+d\a)^{m-i} b_{m-i},
 \]
 and
 \[
    I_\b = I \cap \ZZ[\b] = (\chi_\b)\ZZ[\b], \quad \text{where} \quad \chi_\b = \sum_{i=0}^n (-d)^i (e+d\b)^{n-i} a_{n-i}.
 \] 
 Finally, whenever $p$ is a prime not dividing $d$, we may define the affine map
 \[
    \phi : \FF_p \to \FF_p, \qquad x \mapsto -x - d^{-1}e.
 \]
 
 \begin{thm} 
 \label{thm:divbeltoab}
 In the above notation, let $(r,p)$ be a first-degree prime of $\ZZ[\a]$ dividing $I_\a$ and $(s,p)$ be a first-degree prime of $\ZZ[\a]$ dividing $I_\b$.
 Then $(r+s,p)$ is a first-degree prime of $\ZZ[\a+\b]$ dividing $I$, unless $\phi(r)$ is a root of $g \bmod p$ different from $s$ and, at the same time, $\phi(s)$ is a root of $f \bmod p$ different from $r$.
 \end{thm}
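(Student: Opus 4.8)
The plan is to reduce the statement to the evaluation criterion for first-degree primes dividing principal ideals and then to exploit the fact that $\phi$ is an involution. First I would record the two facts that drive everything. Since $(r+s,p)$ is automatically a first-degree prime of $\ZZ[\a+\b]$ by \cref{idealsdownup}, the description of such primes as evaluation kernels \cite{BLP} gives that it divides $I=(e+d(\a+\b))$ exactly when $\mathrm{ev}_{\a+\b\mapsto r+s}(e+d(\a+\b)) = e+d(r+s)\equiv 0 \bmod p$. On the other hand $\phi(x)=-x-d^{-1}e$ is an involution, $\phi(\phi(x))=x$, and a one-line rearrangement yields
\[
    e+d(r+s)\equiv 0 \bmod p \iff s=\phi(r) \iff r=\phi(s).
\]
(The hypotheses force $p\nmid d$, so $\phi$ is defined: if $p\mid d$ then $\chi_\a\equiv e^m b_m\not\equiv 0\bmod p$ since $\gcd(e,d)=1$, contradicting $(r,p)\mid I_\a$.)

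Next I would translate the two standing hypotheses into conditions on $\phi$. Using the explicit generator of \cref{thm:intersId}, $\chi_\a=\sum_{i=0}^m (-d)^i\Omega^{m-i}b_{m-i}$ with $\Omega=e+d\a$, the divisibility $(r,p)\mid I_\a$ amounts to $\chi_\a\in(r,p)$, i.e. to $\mathrm{ev}_{\a\mapsto r}(\chi_\a)\equiv 0\bmod p$. Reindexing the sum by $k=m-i$ identifies this evaluation as
\[
    \mathrm{ev}_{\a\mapsto r}(\chi_\a)\equiv (-d)^m\, g\!\left(-d^{-1}(e+dr)\right) = (-d)^m\, g\big(\phi(r)\big) \bmod p,
\]
so that, since $p\nmid d$, the condition $(r,p)\mid I_\a$ is equivalent to $\phi(r)$ being a root of $g\bmod p$. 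The symmetric computation with $\chi_\b$ shows that $(s,p)\mid I_\b$ is equivalent to $\phi(s)$ being a root of $f\bmod p$. Thus the two hypotheses are precisely the assertions ``$\phi(r)$ is a root of $g$'' and ``$\phi(s)$ is a root of $f$''.

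Finally I would assemble the pieces by splitting on whether $s=\phi(r)$. If $s=\phi(r)$, then by the involution also $r=\phi(s)$, hence $(r+s,p)$ divides $I$ by the first step, and neither clause of the exceptional condition can hold (each demands $\phi(r)\neq s$, resp. $\phi(s)\neq r$); so we lie outside the exceptional case and the conclusion holds. If instead $s\neq\phi(r)$, then $r\neq\phi(s)$ as well, so $(r+s,p)$ does \emph{not} divide $I$; but the hypotheses guarantee that $\phi(r)$ is a root of $g$ and $\phi(s)$ is a root of $f$, and these roots are now respectively different from $s$ and from $r$, so the exceptional condition holds verbatim. This proves that $(r+s,p)$ divides $I$ if and only if we are not in the exceptional case.

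The computations are routine; the true crux is the evaluation identity $\mathrm{ev}_{\a\mapsto r}(\chi_\a)=(-d)^m g(\phi(r))$, which converts the intrinsic divisibility hypotheses into the combinatorial conditions on $\phi$, together with the observation that $\phi$ is an involution. The latter is exactly what collapses the two separate clauses of the exception into the single negation ``$s\neq\phi(r)$'', so that the equivalence $(r+s,p)\mid I \iff s=\phi(r)$ matches the stated exception with nothing left to check.
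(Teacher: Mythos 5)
Your proof is correct and follows essentially the same route as the paper's: both hinge on showing $p\nmid d$ and on the evaluation identity $\mathrm{ev}_{\a\mapsto r}(\chi_\a) \equiv (-d)^m g\big(\phi(r)\big) \bmod p$ (with its symmetric counterpart), which turns the divisibility hypotheses into the statement that $\phi(r)$ is a root of $g$ and $\phi(s)$ a root of $f$, after which the non-exceptional case forces $e+d(r+s)\equiv 0 \bmod p$. Your extra observation that $\phi$ is an involution—so the two clauses $\phi(r)=s$ and $\phi(s)=r$ are equivalent and the conclusion is actually an equivalence—is a mild sharpening not made explicit in the paper, but it does not alter the argument.
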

 \proof Since $(r,p) | I_\a$, we have
 \[
    \sum_{i = 0}^m (-d)^i(e+dr)^{m-i} b_{m-i} \equiv 0 \bmod p.
 \]
 If $d \equiv 0 \bmod p$, the above equation leads to $e^m \equiv 0 \bmod p$, contradicting the coprimality of $e$ and $d$. Hence, we may assume $d \not\equiv 0 \bmod p$, and write
 \[
    \sum_{i = 0}^m (-d)^i(e+dr)^{m-i} b_{m-i} = (-d)^m g\left( \frac{e+dr}{-d} \right) = (-d)^m g\big( \phi(r) \big).
 \]
 Since $p \nmid d$, this implies that $\phi(r)$ is a root of $g \bmod p$.
 The same argument also shows that $\phi(s)$ needs to be a root of $f \bmod p$.
 By hypothesis we may assume that either $\phi(r) = s$ or $\phi(s) = r$, which both imply 
 \[
    r+s + d^{-1}e \equiv 0 \bmod p.
 \]
 Since $I$ is generated by $e+d(\a+\b)$, the above congruence shows that the combination $(r+s,p)$, which is a first-degree prime ideal of $\ZZ[\a+\b]$ by Proposition \ref{idealsdownup}, divides $I$.
 \endproof
 
 The condition $\phi(r) \neq s$ being a root of $g \bmod p$ and $\phi(s) \neq r$ being a root of $f \bmod p$ of Theorem \ref{thm:divbeltoab} will be referred to as the \emph{exceptional case}.
 It appears to be extremely rare, especially when the considered extensions are small (e.g. see Proposition \ref{prop:case6}).
 However, it may occasionally occur and it might not be evident \emph{a priori}, as shown by the following example.
 
 \begin{exmp}
 Let us consider the polynomials
 \[
    f = x^3 + x^2 + x + 19, \qquad g = x^4 - 6 x^2 - 7 x + 5,
 \]
 generating the number fields $\QQ(\a)$ and $\QQ(\b)$, whose composite $\QQ(\t)$ is generated by
 \begin{align*}
     h = \ & x^{12} + 4 x^{11} - 8 x^{10} + 11 x^9 + 193 x^8 + 824 x^7 + 5663 x^6 + 8910 x^5 + 32405 x^4 + 120009 x^3 \\
     & + 185557 x^2 + 255445 x + 24299.
 \end{align*}
 Let us consider the principal ideal
 \[
    I = (1 + \t)\ZZ[\t],
 \]
 whose intersections with $\ZZ[\a]$ and $\ZZ[\b]$ are generated by 
 \[
    \chi_\a = -4 \a^2 - 23 \a - 50, \qquad \chi_\b = \b^3 + 2 \b^2 + 2 \b - 18.
 \]
 We observe that $(1, 11),(2, 11),(7, 11) \subseteq \ZZ[\a]$ are first-degree prime ideals of $\ZZ[\a]$, while the norm-$11$ first-degree primes of $\ZZ[\b]$ are
 $(3,11), (9,11) \subseteq \ZZ[\b]$.
 However, we have
 \[
    \phi(1) \equiv 9 \bmod 11, \qquad \phi(3) \equiv 7 \bmod 11.
 \]
 Hence, we are in the exceptional case of Theorem \ref{thm:divbeltoab}: the first-degree prime ideal $(4,11) \subseteq \ZZ[\t]$ given by the combination of $(1, 11) \in \ZZ[\a]$ and $(3,11) \in \ZZ[\b]$ does not divide $I$, as
 \[
    1+(1+3) \equiv 5 \not\equiv 0 \bmod 11.
 \]
 \end{exmp}
 
 \begin{remark}
 We highlight that Theorem \ref{thm:divbeltoab} generalizes \cite[Theorem 4]{SanTau}.
 In fact, when $f(x)=x^2-a$ and $g(x)=x^2-b$, the exceptional case occurs only if
 \[\begin{cases}
    e+dr \equiv ds \bmod p,\\
    e+ds \equiv dr \bmod p.
 \end{cases}\]
 If $p = 2$, these equations are both equivalent to $e+d(r+s) \equiv 0 \bmod p$.
 If $p \neq 2$, they imply $e \equiv 0 \bmod p$, but in this case $e+d(r+s) \equiv 0 \bmod p$ they still hold when $r+s \equiv 0 \bmod p$.
 Therefore, the only exceptions may arise when $p \neq 2$, $e \equiv 0 \bmod p$ and $r+s \not\equiv 0 \bmod p$, as prescribed by \cite[Theorem 4]{SanTau}.
 \end{remark}
 
 On the other hand, we show that if a combination divides $I$, then its constituents always divide the correspondent restrictions $I_\a$ and $I_\b$.

 \begin{thm} \label{thm:divabtobel}
 In the above notation, let $(t,p) \subseteq \ZZ[\a+\b]$ be a first-degree prime ideal dividing $I$.
 If there exist first-degree primes $(r,p) \subseteq \ZZ[\a]$ and $(s,p) \subseteq \ZZ[\b]$ such that $r+s \equiv t \bmod p$, then $(r,p)|I_\a$ and $(s,p)|I_\b$.
 \end{thm}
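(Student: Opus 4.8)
The plan is to reverse the computation performed in the proof of Theorem~\ref{thm:divbeltoab}, reading off both divisibilities $(r,p)\mid I_\a$ and $(s,p)\mid I_\b$ from the single hypothesis $(t,p)\mid I$. First I would dispose of the degenerate case $p\mid d$. Since $I$ is generated by $e+d(\a+\b)$, the assumption $(t,p)\mid I$ is equivalent to $e+dt\equiv 0\bmod p$; if $p\mid d$ this collapses to $e\equiv 0\bmod p$, contradicting $\gcd(e,d)=1$. Hence the hypothesis itself guarantees $p\nmid d$, so the morphism $\phi$ is well-defined and I may freely use $d^{-1}$ modulo $p$.

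The core of the argument is to translate the three divisibility conditions into congruences and to observe that they are linked precisely through $\phi$. From $(t,p)\mid I$ together with $t\equiv r+s$ I obtain
\[
    r+s \equiv -d^{-1}e \bmod p,
\]
which I rearrange as $\phi(r)=-r-d^{-1}e\equiv s$ and, symmetrically, $\phi(s)=-s-d^{-1}e\equiv r$. I then recall from the proof of Theorem~\ref{thm:divbeltoab} that, evaluating the generator $\chi_\a$ under $\a\mapsto r$, one has
\[
    \sum_{i=0}^m (-d)^i (e+dr)^{m-i} b_{m-i} \equiv (-d)^m\, g\big(\phi(r)\big) \bmod p,
\]
so that $(r,p)\mid I_\a$ — which amounts to the vanishing of this expression modulo $p$ — holds exactly when $\phi(r)$ is a root of $g\bmod p$. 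Since $\phi(r)\equiv s$ and $s$ is by assumption a root of $g\bmod p$, this gives $(r,p)\mid I_\a$. The identical computation with the roles of $f,g$ and $\a,\b$ interchanged, using $\phi(s)\equiv r$ and the fact that $r$ is a root of $f\bmod p$, yields $(s,p)\mid I_\b$.

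I do not expect a serious obstacle here. In contrast to Theorem~\ref{thm:divbeltoab}, no exceptional case can arise: the hypothesis $t\equiv r+s$ pins down $-d^{-1}e$ as $r+s$ and thereby forces the \emph{equalities} $\phi(r)\equiv s$ and $\phi(s)\equiv r$, rather than merely placing $\phi(r)$ and $\phi(s)$ somewhere among the roots of $g$ and $f$. The only points demanding a little care are bookkeeping the symmetric computation for $\chi_\b$ and confirming that the evaluation of $\chi_\a$ at $\a\mapsto r$ indeed reproduces the expression $(-d)^m g(\phi(r))$ already established in the previous proof.
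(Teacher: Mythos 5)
Your proposal is correct and follows essentially the same route as the paper's own proof: derive $p\nmid d$ from $e+d(r+s)\equiv 0\bmod p$ together with $\gcd(e,d)=1$, rewrite the evaluation of $\chi_\a$ at $\a\mapsto r$ as $(-d)^m g\big(\phi(r)\big)$ with $\phi(r)\equiv s$, and conclude by symmetry. Your write-up is in fact slightly tidier than the paper's (which has a stray $b^m$ where $(-d)^m$ is meant), but there is no substantive difference.
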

 \proof If $(r+s,p)$ divides the ideal generated by $e+d(\a+\b)$, then we have
 \[
    e+d(r+s) \equiv 0 \bmod p.
 \]
 Since $(d,e) = 1$, then $p \nmid d$, so we can write $r \equiv -d^{-1}e-s \bmod p$.
 Thus, we have
 \[
   \sum_{i = 0}^m (-d)^i(e+dr)^{m-i}b_{m-i} \equiv b^m g(-d^{-1}e-r) \equiv b^m g(s) \equiv 0 \bmod p,
 \]
 which proves that $(s,p) | (\chi_\b)\ZZ[\b]$.
 The proof of $(r,p) | (\chi_\a)\ZZ[\a]$ is completely analogous.
 \endproof
 
 \begin{remark}
 The result \cite[Theorem 5]{SanTau} follows by Theorem \ref{thm:divabtobel}, when the considered number fields are quadratic.
 \end{remark}
 
 We also point out that the norms of the considered ideals are equal, hence even the exponents of the first-degree divisors of the given principal ideal may be read from the underlying extensions.
 
 \begin{lemma}
 Let $\xi$, $\chi_\a$ and $\chi_\b$ defined as above, then their norms over $\QQ$ are the same, namely 
 \[
    N_{\QQ(\a+\b)/\QQ}(\xi) = N_{\QQ(\a)/\QQ}(\chi_\a) = N_{\QQ(\b)/\QQ}(\chi_\b).
 \]
 \end{lemma}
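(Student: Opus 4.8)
The plan is to recognize both $\chi_\a$ and $\chi_\b$ as \emph{relative} norms of $\xi$ down to the respective subfields, and then to invoke transitivity of the field norm. The essential identity is already available: in the proof of \cref{thm:intersId}, equation \eqref{eq:norm} establishes that
\[
    \chi_\a = \prod_{i=1}^m (\Omega + d\b_i) = N_{\QQ(\a+\b)/\QQ(\a)}(\xi),
\]
where $\Omega = e + d\a$ and the $\b_i$ range over the roots of $g$. First I would observe that an entirely symmetric computation, obtained by interchanging the roles of $\a$ and $\b$ (and of $f$ and $g$), yields
\[
    \chi_\b = \prod_{i=1}^n (e + d\b + d\a_i) = N_{\QQ(\a+\b)/\QQ(\b)}(\xi),
\]
where the $\a_i$ are the roots of $f$. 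This step is legitimate because the linear disjointness of $\QQ(\a)$ and $\QQ(\b)$ guarantees that $f$ remains the minimal polynomial of $\a$ over $\QQ(\b)$, so its roots are precisely the conjugates of $\a$ in the extension $\QQ(\a+\b)/\QQ(\b)$.

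Next I would exploit the tower $\QQ \subseteq \QQ(\a) \subseteq \QQ(\a+\b)$, which is valid since $\QQ(\a+\b) = \QQ(\a,\b) \supseteq \QQ(\a)$ by \cref{compositum}. Transitivity of the norm then gives
\[
    N_{\QQ(\a)/\QQ}(\chi_\a) = N_{\QQ(\a)/\QQ}\big(N_{\QQ(\a+\b)/\QQ(\a)}(\xi)\big) = N_{\QQ(\a+\b)/\QQ}(\xi).
\]
Applying the identical argument to the tower $\QQ \subseteq \QQ(\b) \subseteq \QQ(\a+\b)$ produces
\[
    N_{\QQ(\b)/\QQ}(\chi_\b) = N_{\QQ(\b)/\QQ}\big(N_{\QQ(\a+\b)/\QQ(\b)}(\xi)\big) = N_{\QQ(\a+\b)/\QQ}(\xi),
\]
and comparing the two displays yields the asserted chain of equalities.

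As for the main obstacle, there is no serious one: the bulk of the work was carried out already in \cref{thm:intersId}, and the statement reduces to multiplicativity of the norm in towers. The only point requiring a little care is verifying the symmetric norm identity for $\chi_\b$, namely confirming that the polynomial expression $\sum_{i=0}^n (-d)^i (e+d\b)^{n-i} a_{n-i}$ indeed equals $\prod_{i=1}^n (e + d\b + d\a_i)$; this follows from the same elementary manipulation of $(-d)^n f\big(-(e+d\b)/d\big)$ that underlies \eqref{eq:norm} for $\chi_\a$. Beyond this bookkeeping, the result is an immediate consequence of the transitivity of the norm.
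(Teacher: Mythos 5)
Your proposal is correct and follows exactly the paper's own argument: the paper proves the lemma by citing equation \eqref{eq:norm} (which identifies $\chi_\a$, and by symmetry $\chi_\b$, as the relative norms $N_{\QQ(\a+\b)/\QQ(\a)}(\xi)$ and $N_{\QQ(\a+\b)/\QQ(\b)}(\xi)$) together with the transitivity of norms in a tower of fields. You have merely spelled out the details that the paper leaves implicit.
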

 \begin{proof}
 It follows directly from \eqref{eq:norm} and the composition of norms (see \cite[Theorem VI.5.1]{Lang}).
 \end{proof}
 
 Finally, we conclude this section by observing that for small extensions we can get rid of the exceptional cases with a few assumptions.
 As an instance, the following proposition prescribes how to produce degree-$6$ number fields where the correspondence between the first-degree prime ideals is perfect, i.e. when exceptional cases never occur.
 
\begin{prop} \label{prop:case6}
 Let $m$ be an odd integer, $\QQ(\t)$ be a Galois field of degree $2m$ and let $\QQ(\a)$ and $\QQ(\b)$ be its degree-$2$ and degree-$m$ subfields, respectively.
 Let $d,e \in \ZZ$ be coprime and $I = (e+d\t) \ZZ[\t]$.
 Then either $I \cap \ZZ[\a] = (0)$ or the first-degree prime ideals of $\ZZ[\t]$ dividing $I$ are precisely the combinations of first-degree prime ideals of $\ZZ[\a]$ and $\ZZ[\b]$ dividing $I \cap \ZZ[\a]$ and $I \cap \ZZ[\b]$, respectively.
 \end{prop}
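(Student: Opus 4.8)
The plan is to show that, for each prime $p$, both the set of first-degree primes of $\ZZ[\t]$ dividing $I$ and the set of admissible combinations collapse to the single prime $(\tau,p)$ with $\tau=-d^{-1}e$, so that the two descriptions coincide; the entire difficulty will be to exclude the exceptional case of \cref{thm:divbeltoab}. First I would fix the field-theoretic setup. Since $m$ is odd, $\gcd(2,m)=1$, so by \cref{CompositeDegree} the fields $\QQ(\a)$ and $\QQ(\b)$ are linearly disjoint and, by \cref{compositum}, their compositum is $\QQ(\a+\b)=\QQ(\t)$; I identify $\t=\a+\b$ and write $R=R_{f,g}$ for the defining polynomial of $\QQ(\t)$ as in \cref{corcomp}. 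The quadratic field $\QQ(\a)$ is automatically normal. Writing $G=\mathrm{Gal}(\QQ(\t)/\QQ)$ of order $2m$, the well-definedness of \emph{the} degree-$m$ subfield means $G$ has a unique subgroup of order $2$; such a subgroup is necessarily central, so $G\cong N\times C_2$ with $|N|=m$, and $\QQ(\b)$ is the fixed field of the $C_2$-factor, hence normal. Thus both subfields are normal and of coprime degrees.

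For the inclusion of the first-degree primes dividing $I$ into the admissible combinations, I would invoke \cref{prop:soprasotto}: because the two subfields are normal with coprime degrees, every first-degree prime $(t,p)$ of $\ZZ[\t]$ is a combination $(r+s,p)$ of first-degree primes $(r,p)\subseteq\ZZ[\a]$ and $(s,p)\subseteq\ZZ[\b]$. If moreover $(t,p)\mid I$, then \cref{thm:divabtobel} forces $(r,p)\mid I\cap\ZZ[\a]$ and $(s,p)\mid I\cap\ZZ[\b]$. Hence every first-degree prime of $\ZZ[\t]$ dividing $I$ is a combination of the prescribed type.

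The reverse inclusion is the crux. I take $(r,p)\mid I\cap\ZZ[\a]$ and $(s,p)\mid I\cap\ZZ[\b]$ and must show $(r+s,p)\mid I$, i.e. rule out the exceptional case of \cref{thm:divbeltoab}; here the hypothesis $\deg f=2$ is decisive. In the exceptional case $\phi(s)$ is a root of $f\bmod p$ different from $r$, and since $f$ has only two roots, $\phi(s)$ must be the \emph{other} root $r'\neq r$; likewise $\phi(r)\neq s$ is a root of $g\bmod p$. Noting $p\nmid d$ (as $\gcd(d,e)=1$) and setting $\tau=-d^{-1}e$, the statements that $\phi(r)=\tau-r$ and $\phi(r')=\tau-r'$ are roots of $g$ read precisely as
\[
    \tau = r+\phi(r) = r'+\phi(r'),
\]
exhibiting $\tau$ as a value attained by two \emph{distinct} pairs $(r,\phi(r))\neq(r',\phi(r'))$ consisting of an $f$-root and a $g$-root. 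By \cref{rescomp} the roots of $R\bmod p$ are exactly such sums, so this makes $\tau$ a multiple root of $R\bmod p$. On the other hand, $\tau$ being a root at all gives $R\bmod p$ a linear factor; since $\QQ(\t)$ is Galois, all inertia degrees over $p$ agree (as in the proof of \cref{prop:dividingDegree}), so $p$ splits completely and $R\bmod p$ is separable, contradicting the multiplicity. Therefore the exceptional case cannot occur, $(r+s,p)\mid I$, and $r+s\equiv\tau\bmod p$ pins this prime down as $(\tau,p)$. Combining the two inclusions, both sets equal $\{(\tau,p)\}$ when $R(\tau)\equiv 0\bmod p$ and are empty otherwise.

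The main obstacle is exactly the separability step in the crux above: it is what converts the degree-$2$ form of the exceptional case into the rigid assertion ``$\tau$ is a repeated root of $R\bmod p$'', which Galois-ness then forbids. The delicate point I would need to treat with care is that this separability argument is clean only for primes $p\nmid[\O_{\QQ(\t)}:\ZZ[\t]]$, whereas at the finitely many primes dividing this index (and at ramified primes) $R\bmod p$ may fail to be separable and the simple-root hypothesis of \cref{updown1} breaks down; these are precisely the degenerate situations that the first branch $I\cap\ZZ[\a]=(0)$ of the statement is present to absorb, and I expect the bulk of a careful write-up to consist in checking that outside this branch the two sets genuinely coincide.
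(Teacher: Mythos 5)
Your setup and forward inclusion follow the paper (normality of both subfields, \cref{updown2}, then \cref{thm:divabtobel}), and your reduction of the exceptional case to ``$\tau=-d^{-1}e$ is a multiple root of $R \bmod p$'' is correct and genuinely different from the paper's argument. The gap is in the step that derives a contradiction from that multiplicity. The implication ``$R \bmod p$ has a root in $\FF_p$ and $\QQ(\t)/\QQ$ is Galois, hence $p$ splits completely and $R\bmod p$ is separable'' is false in general: it needs both that $p$ be unramified in $\QQ(\t)$ and that $p\nmid[\O_{\QQ(\t)}:\ZZ[\t]]$, since only under those hypotheses does Dedekind's theorem tie the factorization of $R\bmod p$ to the splitting of $p$. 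At a ramified prime of a Galois field with residue degree $1$ one has $R\bmod p=\prod_i(x-t_i)^{\frake}$ with $\frake>1$ (already $x^2+1\bmod 2$ exhibits the phenomenon), and at primes dividing the index $R\bmod p$ can acquire repeated factors with no ramification at all; in both situations your contradiction evaporates. You acknowledge this, but the proposed repair --- that such primes are ``absorbed'' by the branch $I\cap\ZZ[\a]=(0)$ --- does not work: that branch is a single global dichotomy on the ideal $I$ (it asks whether $\chi_\a=N_{\QQ(\a+\b)/\QQ(\a)}\big(e+d(\a+\b)\big)$ vanishes, which by linear disjointness essentially never happens for $m>1$), and it carries no information about which primes $p$ ramify or divide the index. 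So your argument leaves the claim unproved at exactly those finitely many, but perfectly admissible, primes.

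The paper closes this case by a purely algebraic argument that never sees ramification: since $[\QQ(\a):\QQ]=2$, the generator $\chi_\a$ is a linear expression $u+v\a$ with $u,v\in\ZZ$, so $\mathrm{ev}_{\a\mapsto w}(\chi_\a)=u+vw$ vanishes for at most one $w\in\FF_p$ unless $\chi_\a=0$; since $(r,p)\mid I_\a$ forces $w=r$ to be that unique zero, and $\mathrm{ev}_{\a\mapsto w}(\chi_\a)=(-d)^m g\big(\phi(w)\big)$ on roots of $f\bmod p$, one concludes $s=\phi(r)$, which is precisely the negation of the exceptional case. If you want to retain your separability route you must add a separate argument for ramified and index-dividing primes; otherwise the degree-$2$ linearity of $\chi_\a$ is the mechanism that actually makes the proposition go through.
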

 \begin{proof}
 We first notice that $\QQ(\a)$ and $\QQ(\b)$ are normal extensions of coprime degrees, hence by Proposition \ref{prop:lin.disj} they are linearly disjoint.
 
 On one side, by Proposition \ref{updown2} every first-degree prime ideal of $\ZZ[\t]$ arises from a combination of $(r,p) \subseteq \ZZ[\a]$ and $(s,p) \subseteq \ZZ[\b]$, and by Theorem \ref{thm:divabtobel} we know that $(r,p)|I_\a$ and $(s,p)|I_\b$.
 
 On the other side, assume that there are first-degree prime ideals $(r,p)|I_\a$ and $(s,p)|I_\b$.
 In this case $p \nmid d$, otherwise 
 \[
    0 \equiv \text{ev}_{\a \mapsto r}(\chi_\a) \equiv e^m,
 \]
 which would contradict the coprimality of $e$ and $d$.
 Since $[\QQ(\a):\QQ] = 2$, then $\chi_\a$ is a linear polynomial in $\a$.
 Thus, either $\chi_\a = 0$, or there is at most one solution $w \in \FF_p$ to
 \[
    \text{ev}_{\a \mapsto w} (\chi_\a) = (-d)^m g\big( \phi(w) \big).
 \]
 In the latter case, since $(r,p)|I_\a$ we conclude that $w = r$ is the unique zero of $\text{ev}_{\a \mapsto w} (\chi_\a)$ over $\FF_p$.
 Since $\phi$ is linear and $p \nmid d$, this implies that $s = \phi(r)$ is the unique root of $g \bmod p$, so Theorem \ref{thm:divbeltoab} applies, proving that $(r+s,p)|I$.
 \end{proof}
 
 Proposition \ref{prop:case6} notably applies for $k=3$ on sextic extensions, which are widely studied for the GNFS optimization \cite{RSA-704, CADO-NFS, RSA-768}.
 We observe that the normality condition is only necessary for ensuring that every first-degree prime of $\ZZ[\t]$ is obtained via ideal combination, but it may be dropped whenever finding them all is not a requirement.
 This is usually the case in algorithmic practice, where we are only interested in efficiently finding plenty of them. 
 Furthermore, we will computationally observe in Section \ref{sec:smooth} that the quantity of first-degree prime ideals one may miss by dropping the normality assumption is negligible, especially for primes of large norms.
 
 
 \section{Computational improvement} \label{sec:computational}
 
 In the previous sections we proved that, apart from rare exceptions, we may compute first-degree primes in composite extensions by addressing the same problem inside underlying subfields and composing the resulting solutions.
 This approach is particularly efficient for computing large sets of first-degree prime ideals in composite extensions of high degree, although consistent time improvements may also be appreciated in the well-studied degree-$6$ extensions.
 
 In the present section, we discuss the time reduction obtained from such an approach, and we computationally evaluate the results with Magma \cite{Magma}\footnote{Our testing has been performed on a personal computer running Magma V2.25-3, CPU: Intel(R) Core(TM) i7-8565U @ 1.80GHz. The MAGMA implementation may be found on GitHub at \url{https://github.com/DTaufer/First-degree-prime-ideals/blob/main/MagmaCode.m}}.

 \subsection{Asymptotic complexity}  \label{sec:Complexity}
 
 We consider a number field $\QQ(\t) = \QQ[x]/(h)$ obtained from the compositum of linearly disjoint number fields $\QQ(\a_i) = \QQ[x]/(f_i)$, and we compare the following approaches for finding first-degree prime ideals of $\ZZ[\t]$ of norm $p$.
 \FloatBarrier
 \begin{table}[!htb]
 \begin{center}
\begin{tabular}{||c || c||} 
 \hline
 Standard approach & Composite approach \\ [0.5ex] 
 \hline\hline
 Compute the roots $\mathcal{R}$ of $f \bmod p$ & Compute the roots $\mathcal{R}_i$ of $f_i \bmod p$ \\ 
 \hline
 Return $\{(r_j,p)\}_{r_j \in \mathcal{R}}$ & Return $\{(\sum_{i} r_i,p)\}_{r_i \in \mathcal{R}_i}$ \\
 \hline
\end{tabular}
\end{center}
\caption{The standard and the composite approaches for finding first-degree prime ideals.}
\label{tab:comparison1}
\end{table}
\FloatBarrier
 The complexity of both algorithms depends on the complexity of computing the roots of a given degree-$n$ polynomial over $\FF_p$, which can be achieved via the renowned Berlekamp algorithm \cite{Berlekamp}, or with more sophisticated approaches \cite{Subquadratic,RootFinding}, whose asymptotic complexity depends on the relation between $n$ and $p$.
 From a GNFS perspective, one is mostly interested in the asymptotic behavior of $p$, and the asymptotic complexity for the best-known algorithms when $p \to \infty$ is 
 \[
    O\left(n^{1+o(1)}\log p\right).
 \]
 By the Prime Number Theorem, a random positive integer $\leq M$ is prime with probability $1/\log M$, and when it is prime it requires
 $O(n^{1+o(1)}\log M)$ field operations to compute the first-degree primes of that norm.
 Thus, the computational cost of computing the first-degree prime ideals of norms $\leq M$ is expected to grow linearly with $M$.
 
 In our setting, since the underlying extensions are linearly disjoint, if $n_i = \deg(f_i)$ then $h$ may be obtained as an iterated resultant and it has degree $\deg(h) = \prod_i n_i$.
 Hence, the standard approach for finding first-degree primes in $\ZZ[\t]$ of norms $\leq M$ should require $O\big(\deg(h)^{1+o(1)} M\big)$ field operations.
 
 On the other side, solving the same problem in the smaller subfields requires repeated roots finding of degree-$n_i$ polynomials over the same base-field $\FF_{p}$, each of which can be accomplished in $O\big(n_i^{1+o(1)} p\big)$ fields operations.
 Afterward, the solutions need to be composed, which requires at most $\prod_i n_i$ additions over $\FF_p$, which does not depend on $p$ so we can neglect it.
 
 The above discussion implies that, for large values of $p$, the two approaches have the same asymptotic linear complexity.
 However, it also shows that by employing the composite approach we should expect an asymptotically linear reduction in time of about $\frac{\prod_i n_i}{\sum_i n_i}$.
 In the following sections we will computationally verify these estimates observing that, although linear, this improvement may actually be conspicuous even in small cases.

\subsection{Degree-6 extensions} \label{sec:case6}

 Here we consider degree-$6$ extensions, the degree that is often employed for the polynomial-selection phase of the GNFS \cite{RSA-704,RSA-768}.
 In the sieving phase of such an algorithm, a large set of first-degree prime ideals has to be computed to construct the \emph{algebraic factor base}.

 Every degree-$2$ polynomial is normal, and constructing degree $3$ normal polynomials is computationally effortless, hence we have decided to deal with degree-$6$ Galois extensions.
 This way, by Proposition \ref{updown2} we are guaranteed that both approaches produce the same outcome.
 
 We randomly selected ten instances of such extensions and computed the average time needed for the two aforementioned approaches to produce the first-degree prime ideals of norm $p \leq M$ for $M \leq 10^9$.
 The results are collected in Figure \ref{fig:results6}.
 
\FloatBarrier
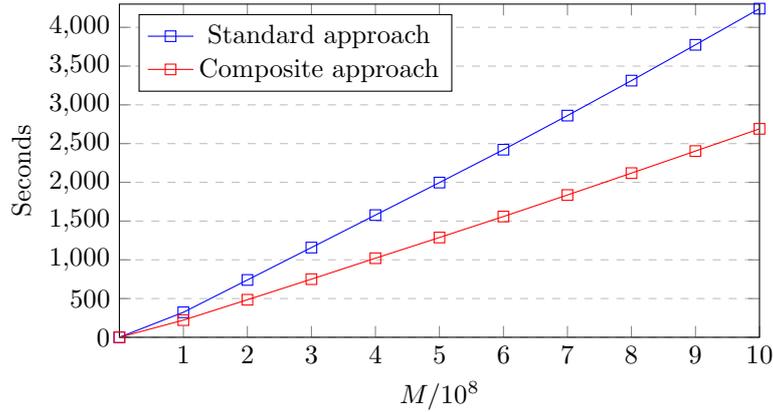
\begin{figure}[!hbt] \centering
 \begin{tikzpicture}
\begin{axis}[ width=10cm,height=6cm,
    xlabel={$M/10^8$},
    ylabel={Seconds },
    xmin=0, xmax=10, 
    ymin=0, ymax=4300, 
    xtick={1,2,3,4,5,6,7,8,9,10}, 
    ytick={0,500,1000,1500,2000,2500,3000,3500,4000}, 
    legend pos=north west,
    ymajorgrids=true,
    grid style=dashed,
]

\addplot[
    color=blue,
    mark=square,
    ]
    coordinates { 
    (0,0)(1,323.0)(2,740.2)(3,1158.1)(4,1577.4)(5,1996.9)(6,2421.4)(7,2861.4)(8,3312.6)(9,3774.7)(10,4241.3)
    };

\addplot[
    color=red,
    mark=square,
    ]
    coordinates { 
    (0,0)(1,221.7)(2,484.8)(3,750.3)(4,1020.7)(5,1287.8)(6,1558.8)(7,1836.3)(8,2119.0)(9,2403.5)(10,2690.2)
    };
    
\legend{Standard approach, Composite approach}    
\end{axis}
 \end{tikzpicture}
 \caption{Time needed to compute first-degree prime ideals of norm up to $M$ for a degree-$6$ defining polynomial.}
 \label{fig:results6}
\end{figure}
\FloatBarrier

As discussed in Section \ref{sec:Complexity}, the computational time appears to grow linearly with $M$, and the composite approach proves to be faster by a factor $\sim 1.5$.

\subsection{Extensions of smooth degrees} \label{sec:smooth}

According to the complexity estimations of Section \ref{sec:Complexity}, the composite approach is expected to be notably faster whenever the degree of the considered extensions has small prime factors.
Here we test an instance of such extensions with a moderately small extension.

We consider number fields of degree $315 = 3^2 \cdot 5 \cdot 7$, which can be obtained from their linearly disjoint number sub-fields of small degrees, as in the diagram below.

\FloatBarrier
\begin{figure}[!ht]
\centering
\begin{tikzpicture}[scale=0.8, transform shape]
\node (Q)                  {$\mathbb{Q}$};
\node[above left=1.7cm and 4cm of Q] (Q1)  {$\QQ(\a_1)$};
\node[right=2cm of Q1] (Q2)  {$\QQ(\a_2)$};
\node[above right=1.7cm and 4 cm of Q] (Q4) {$\QQ(\a_4)$}; 
\node[left=2 of Q4] (Q3)  {$\QQ(\a_3)$};
\node[above=4cm of Q] (Q1234) {$\QQ(\t) = \QQ(\a_1+\a_2+\a_3+\a_4)$};

\draw (Q) -- (Q1) node [midway, below, xshift = -0.35cm, yshift = 0.1cm] (TextNode) { $3$ } -- (Q1234) node [midway, below, xshift = -0.6cm, yshift = 0.3cm] (TextNode) { $105$ };
\draw (Q) --(Q2) node [midway, below, xshift = -0.25cm, yshift = 0.1cm] (TextNode) { $3$ } -- (Q1234) node [midway, below, xshift = -0.55cm, yshift = 0.1cm] (TextNode) { $105$ };
\draw (Q) -- (Q3) node [midway, below, xshift = 0.25cm, yshift = 0.1cm] (TextNode) { $5$ }  -- (Q1234)node [midway, below, xshift = 0.45cm, yshift = 0.1cm] (TextNode) { $63$ };
\draw (Q) -- (Q4) node [midway, below, xshift = 0.35cm, yshift = 0.1cm] (TextNode) { $7$ } -- (Q1234) node [midway, below, xshift = 0.45cm, yshift = 0.3cm] (TextNode) { $45$ };
\end{tikzpicture}
\caption{Lattice of the minimal fields in a number field of degree $315$. The large extension is realized as the compositum of the small underlying fields.}
\label{fig:lattice}
\end{figure}
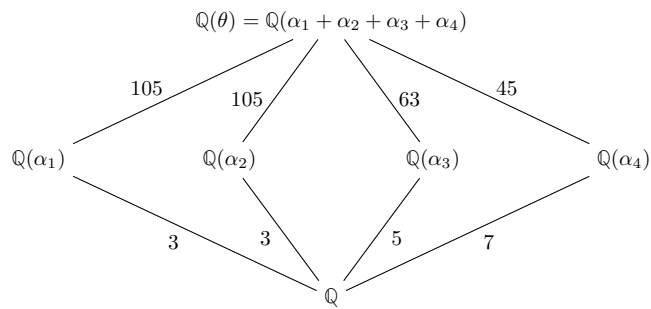
\FloatBarrier

A repeated application of Proposition \ref{idealsdownup} shows that we can compute the first-degree primes of $\ZZ[\t]$ by simply composing those of each $\ZZ[\a_i]$.
The time improvement with respect to the standard approach is noteworthy, as it is witnessed by Figure \ref{fig:speedComparison}.
In this case, the composite approach is $\sim 39$ times faster than the standard one.

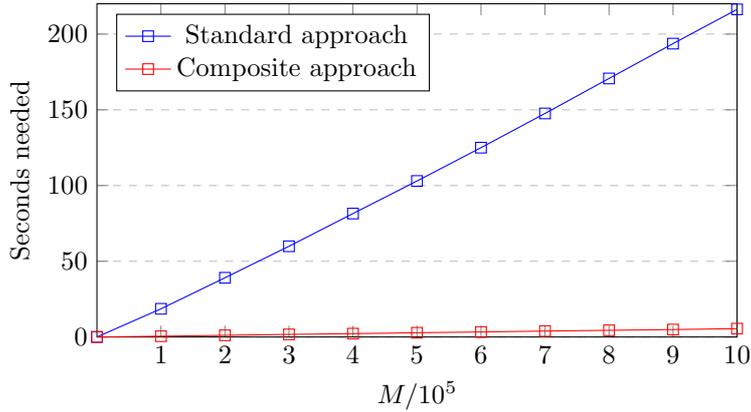
\begin{figure}[!htb] \centering
 \begin{tikzpicture}
\begin{axis}[ width=10cm,height=6cm,
    xlabel={$M/10^5$},
    ylabel={Seconds needed},
    xmin=0, xmax=10, 
    ymin=0, ymax=220, 
    xtick={1,2,3,4,5,6,7,8,9,10}, 
    ytick={0,50,100,150,200}, 
    legend pos=north west,
    ymajorgrids=true,
    grid style=dashed,
]

\addplot[
    color=blue,
    mark=square,
    ]
    coordinates { 
    (0,0)(1,18.6)(2,39.1)(3,59.9)(4,81.5)(5,103.0)(6,125.0)(7,147.6)(8,170.7)(9,193.7)(10,216.3)
    };

\addplot[
    color=red,
    mark=square,
    ]
    coordinates { 
    (0,0)(1,0.5)(2,1.1)(3,1.7)(4,2.2)(5,2.8)(6,3.3)(7,3.9)(8,4.4)(9,4.9)(10,5.5)
    };
    
\legend{Standard approach, Composite approach}    
\end{axis}
 \end{tikzpicture}
 \caption{Time needed to compute first-degree prime ideals of norm up to $M$ for a degree-$315$ defining polynomial.}
 \label{fig:speedComparison}
\end{figure}

In this setting, neither the degrees of the sub-fields are coprime nor the considered extensions are normal, so we should expect to miss a few first-degree primes.
We have considered ten randomly generated degree-$315$ number fields and we have collected the number of ideals constructed with the two approaches in Figure \ref{fig:diffNumberFDPI}.

\FloatBarrier
 \begin{table}[!htb] \centering
\begin{tabular}{ |c|c|c|c|c|c|c|c|c|c|c| } 
\cline{2-11}
 \multicolumn{1}{c|}{} & \multicolumn{10}{c|}{ $p$ ranging from $i \cdot 10^7$ to $(i+1) \cdot 10^7$} \\ 
\cline{1-11} \rule{0pt}{0.35cm}
 $10^7 \, \cdot$ & $i = 0$ & $i = 1$ & $i = 2$ & $i = 3$ & $i = 4$ & $i = 5$ & $i = 6$ & $i = 7$ & $i = 8$ & $i = 9$ \\
 \hline 
 Standard & 94759 & 83520 & 80137 & 79167 & 76478 & 74732 & 71694 & 75699 & 73324 & 72671 \\ 
 Composite & 94679 & 83518 &  80131 &  79166 & 76478 & 74732 & 71694 & 75698 & 73324 & 72671 \\
 \hline
 Difference & 80 & 2 & 6 & 1 & 0 & 0 & 0 & 1 & 0 & 0 \\
 \hline
\end{tabular}
 \caption{Number of norm-$p$ first-degree prime ideals constructed with the different approaches.}
 \label{fig:diffNumberFDPI}
 \end{table}
 \FloatBarrier

 In the instance portrayed by Table \ref{fig:diffNumberFDPI} the number of ideals that the composite approach misses in the general case is irrelevant, especially when their norm increases.
 This had to be expected from Proposition \ref{updown1}, as explained in Remark \ref{rmk:AlmostAlways}.

 
\section{Conclusions} \label{sec:conclusions}
 We have analyzed the behavior of first-degree prime ideals in composite extensions of number fields in terms of those arising from the underlying extensions, and we have characterized the cases when such correspondence is completely achieved. 
 Moreover, we have studied the divisibility of special-shaped principal ideals in every compositum of linearly disjoint fields in terms of the first-degree prime ideals of the underlying fields dividing the relative norms of the considered ideal. 
  
 Our work shows that the information of first-degree prime ideals of composite extensions may often be read from the underlying fields efficiently.
 Thus, when designing algorithms that deal with first-degree prime ideals, one may conceivably work inside small and easy-to-handle fields to achieve results in arbitrarily complex extensions. 
 In fact, we demonstrated that it is often sufficient and worthwhile to know the behavior of such prime ideals inside prime-degree number fields.

 A further investigation in this direction would require a deep and detailed study of the characteristics of the polynomials that are optimal for such an algorithm.
 In particular, the properties defined in \cite{Briggs} should be explored for the polynomials constructed as resultants, but such an analysis is solely focused on the application of this theory to the GNFS and goes beyond the scope of this paper.
 Moreover, the fast production of first-degree prime ideals is not the bottleneck of the state-of-the-art implementations of the GNFS, and the current heuristics suggest generators whose minimal polynomials have small coefficients.
 However, other types of compositions may be investigated to mimic the additive linear combination proposed in this work, in order to connect the properties of composite fields with those of their underlying subfields.
    
 
 \section*{Acknowledgments}
 The authors would like to thank professors Massimiliano Sala, Michele Elia, and Willem A. de Graaf for their useful advice and discussions. \\
 DT was supported in part by the European Union’s H2020 Programme, grant number ERC-669891, and in part by the Research Foundation - Flanders (FWO), project 12ZZC23N.
 

\end{document}